\documentclass{amsart}
\usepackage{amstext}
\usepackage{amsthm}

\usepackage{amssymb}
\usepackage{latexsym}
\usepackage{amsmath}
\usepackage{amscd}
\usepackage{amsfonts}%
\usepackage{graphicx}

\usepackage{mathrsfs}

\usepackage{tikz}
\usepackage{tikz-cd}
\usetikzlibrary{matrix,arrows}

\setlength{\textwidth}{\paperwidth} 
\addtolength{\textwidth}{-6cm}
\setlength{\textheight}{\paperheight}
\addtolength{\textheight}{-4cm}
\addtolength{\textheight}{-\headheight}
\addtolength{\textheight}{-\headsep}
\addtolength{\textheight}{-\footskip}
\setlength{\oddsidemargin}{0.5cm} 
\setlength{\evensidemargin}{0.5cm}
\setlength{\topmargin}{-0.5cm}

%
\newtheorem{theorem}{Theorem}[section]

\newtheorem{lemma}[theorem]{Lemma}

\newtheorem{proposition}[theorem]{Proposition}

\theoremstyle{remark}

\numberwithin{equation}{section}

\newcommand{\Hcal}{\mathscr{H}}

\newcommand{\Lcal}{\mathscr{L}}
\newcommand{\Mcal}{\mathscr{M}}

\newcommand{\Rcal}{\mathscr{R}}

\newcommand{\Z}{\mathbb{Z}}
\newcommand{\C}{\mathbb{C}}

\newcommand{\Q}{\mathbb{Q}}

\newcommand{\A}{\mathbb{A}}

\newcommand{\Nr}{\mathrm{Nr}}
\newcommand{\Val}{\mathrm{Val}}

  \DeclareFontFamily{U}{wncy}{}
    \DeclareFontShape{U}{wncy}{m}{n}{<->wncyr10}{}
    \DeclareSymbolFont{mcy}{U}{wncy}{m}{n}
    \DeclareMathSymbol{\Sha}{\mathord}{mcy}{"58}

\begin{document}
\title[The Diophantine problem for exponential polynomials]{The Diophantine problem for rings of exponential polynomials}

\author[D. Chompitaki]{Dimitra Chompitaki}
\address{Department of Mathematics and Applied Mathematics, University of Crete-Heraklion, Greece}
\email[D. Chompitaki]{d.hobitaki@gmail.com}%

\author[N. Garcia-Fritz]{Natalia Garcia-Fritz}
\address{ Departamento de Matem\'aticas,
Pontificia Universidad Cat\'olica de Chile.
Facultad de Matem\'aticas,
4860 Av.\ Vicu\~na Mackenna,
Macul, RM, Chile}
\email[N. Garcia-Fritz]{natalia.garcia@mat.uc.cl}%

\author[H. Pasten]{Hector Pasten}
\address{ Departamento de Matem\'aticas,
Pontificia Universidad Cat\'olica de Chile.
Facultad de Matem\'aticas,
4860 Av.\ Vicu\~na Mackenna,
Macul, RM, Chile}
\email[H. Pasten]{hpasten@gmail.com}%

\author[T. Pheidas]{Thanases Pheidas}
\address{Department of Mathematics and Applied Mathematics, University of Crete-Heraklion, Greece}
\email[T. Pheidas]{pheidas@uoc.gr}%

\author[X. Vidaux]{Xavier Vidaux}
\address{ Universidad de Concepci\'on, Facultad de Ciencias F\'isicas y Matem\'aticas, Departamento de Matem\'atica. Casilla 160 C}
\email[X. Vidaux]{xvidaux@udec.cl}%

\thanks{D. C. was supported by the Hellenic Foundation for Research and Innovation
(HFRI) and the General Secretariat for Research and Technology (GSRT), under the HFRI PhD
Fellowship grant (GA. no.2321). }
\thanks{N. G.-F. was supported by ANID Fondecyt Regular grant 1211004, ANID (ex CONICYT) Fondecyt Iniciacion en Investigacion 11170192 and ANID (ex CONICYT) PAI grant 79170039.}
\thanks{H. P. was supported by ANID (ex CONICYT) Fondecyt Regular grant 1190442.}
\thanks{D. C. and T. P. were supported in part by a European Union Erasmus+ project through the University of Crete, Greece, and the University of Concepci\'on, Chile.}
\thanks{ X. V. was supported in part by ANID Fondecyt Regular grant 1210329 and ANID (ex CONICYT) Fondecyt Regular grant 1170315.}
\date{\today}
\subjclass[2010]{Primary: 11U05, 03B25, 11J81; Secondary: 30D15} %
\keywords{Hilbert's Tenth problem, exponential polynomials, Ax-Schanuel, Pell equations}%

\begin{abstract} One of the main open problems regarding decidability of the existential theory of rings is the analogue of Hilbert's Tenth Problem (HTP) for the ring of entire holomorphic functions in one variable. In the direction of a negative solution, we prove unsolvability of HTP for rings of exponential polynomials. This provides the first known case of HTP for a ring of entire holomorphic functions in one variable strictly containing the polynomials. The technique of proof consists of an interaction between Arithmetic, Analysis, Logic, and Functional Transcendence. 
\end{abstract}

\maketitle



\section{Introduction}

By the work of Davis-Putnam-Robinson \cite{DPR} and Matijasevich \cite{Matijasevich1970} we know that Hilbert's tenth problem is unsolvable. More generally, let $A$ be a ring and let $A_0$ be a recursive subring of $A$. The analogue of Hilbert's tenth problem for $A$ with coefficients in $A_0$ asks for an algorithm which decides the solvability in $A$ of polynomial equations with coefficients in $A_0$. See the surveys \cite{Pheidas1994}, \cite{PheidasZahidi2000}, \cite{Poonen2008}, and \cite{Koenigsmann14} for a presentation of results and open problems in the context of extensions of Hilbert's tenth problem.

Let $\Rcal$ be the ring  of complex entire functions in one variable $z$ of the form $\sum_{j=1}^n p_j\exp(q_j)$, where $p_j,q_j\in \C[z]$ for each $j$. Holomorphic functions of this type are usually called \emph{exponential polynomials} (of finite order) and there is considerable interest in their value distribution properties going back at least to \cite{Ritt1929}. See  \cite{HITW} and \cite{GSW} for some recent results and an overview of this topic.

The analogue of HTP for $\Rcal$ with coefficients in $\Z$ is decidable by Tarski's Theorem, because $\C\subseteq\Rcal$, and any solution in $\Rcal$ to a system of polynomial equations over the integers can be evaluated at $z=0$ to get a complex solution. Our main result is a negative solution to Hilbert's tenth problem on $\Rcal$ with coefficients in $\Z[z]$. 

\begin{theorem} \label{ThmIntroH10} Let $\Rcal'$ be a subring of $\Rcal$ containing the variable $z$.  The ring $\Z$ is positive existentially interpretable in the ring $\Rcal'$ over the language $\Lcal_z=\{0,1,z,+,\times,=\}$. In particular, the analogue of Hilbert's tenth problem for $\Rcal'$ with coefficients in $\Z[z]$ has a negative answer.
\end{theorem}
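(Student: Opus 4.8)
The plan is to interpret $\Z$ positive-existentially inside $\Rcal'$ by encoding integers as exponents appearing in exponential polynomials, and then transferring the known undecidability of $\Z$ (via DPR--Matijasevich) to $\Rcal'$. The first task is to single out, using positive existential formulas over $\Lcal_z$, a subset of $\Rcal'$ that is a faithful copy of $\Z$. The natural candidate is the set of functions of the form $\exp(nz)$ for $n\in\Z$: this set is closed under multiplication, since $\exp(mz)\exp(nz)=\exp((m+n)z)$, which would encode addition of integers. To make this usable I would first give a positive existential definition of the multiplicative group $\{\exp(c z):c\in\C\}$, or of a suitable subgroup, inside $\Rcal'$ --- for instance as the set of units $u$ with $u^{-1}\in\Rcal'$ (note $z\in\Rcal'$ is itself not a unit, and one has good control of units of $\Rcal$ since $u$ a unit forces $u=a\exp(q)$ with $a\in\C^\times$, $q\in\C[z]$). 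The key difficulty, and the heart of the argument, is to pin down which complex coefficients $c$ occur, i.e.\ to cut the group $\{\exp(cz)\}$ down to exactly $\{\exp(nz):n\in\Z\}$, and to simultaneously define multiplication (not just addition) on this copy of $\Z$.

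For the multiplication on $\Z$, the idea is to use a Pell-like equation, as the keywords of the paper suggest. Over a ring of functions, an equation of the form $X^2-(T^2-1)Y^2=1$ (or a variant with the ``parameter'' $T$ replaced by something like $\cosh(z)$ or by $z$ itself) has solutions parametrized by Chebyshev-type polynomials or by powers $(\,T+\sqrt{T^2-1}\,)^n$; the exponent $n$ that appears is exactly the integer we want, and the recursion satisfied by Pell solutions encodes multiplication. Concretely, I expect to work with the function $w=\exp(z)$, so that $t=\tfrac12(w+w^{-1})=\cosh(z)$ plays the role of the Pell parameter, and solutions $(x,y)$ of $x^2-(t^2-1)y^2=1$ in $\Rcal'$ are governed by $x+\sqrt{t^2-1}\,y=(t+\sqrt{t^2-1})^{\pm n}=w^{\pm n}=\exp(\pm nz)$ for $n\in\Z$ --- this is where the restriction of the exponent to $\Z$ comes from, because the analytic/algebraic constraint of lying in $\Rcal'$ and satisfying the Pell relation forbids non-integer or complex $c$. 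Establishing that the only solutions are these (no ``spurious'' solutions coming from the large coefficient field $\C$ or from the extra exponential flexibility of $\Rcal$) is exactly the point where Functional Transcendence enters: one invokes an Ax--Schanuel type statement for the exponential function to rule out unexpected algebraic relations among $z$, $\exp(z)$, and the functions occurring in a hypothetical exotic solution. This step --- proving that the Pell equation over $\Rcal'$ has \emph{only} the expected solutions --- is the main obstacle, and I would expect it to be the technical core of the paper, combining the Ax--Schanuel theorem with an analysis of orders of growth and of zeros/poles of the functions involved.

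Once the Pell analysis is in place, the remaining steps are comparatively routine and follow the standard Denef--Lipshitz--Pheidas template for proving undecidability over rings of functions. I would: (i) fix the definable copy $N\subseteq\Rcal'$ of $\N$ (or $\Z$) as the set of $x$ appearing in a Pell solution $(x,y)$ as above, together with the definable bijection $n\leftrightarrow \exp(nz)\leftrightarrow$ ($n$-th Pell solution); (ii) define addition on $N$ via multiplication of the corresponding exponentials, $\exp(mz)\cdot\exp(nz)=\exp((m+n)z)$; (iii) define multiplication on $N$ using the Pell recursion --- the classical trick is that $X_{mn}$ is characterized by a positive existential condition involving $X_m$, $X_n$, and divisibility relations among Pell values, or equivalently using the identity relating $X_{m+n}$, $X_{m-n}$, $X_mX_n$ that lets one bootstrap from addition and a single ``squaring'' or ``multiplication by a fixed solution'' operation; (iv) check that all the sets and graphs used ($N$, $+$, $\times$, the relevant order or divisibility) are positive existentially definable over $\Lcal_z=\{0,1,z,+,\times,=\}$, using that $z$ itself is available as a distinguished element from which $\exp(z)$, $\cosh(z)$ etc.\ are reached existentially. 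This yields a positive existential interpretation of the structure $(\Z;0,1,+,\times)$ in $(\Rcal';\Lcal_z)$; composing with the DPR--Matijasevich theorem gives that the positive existential theory of $\Rcal'$ in $\Lcal_z$ is undecidable, and in particular Hilbert's tenth problem for $\Rcal'$ with coefficients in $\Z[z]$ has a negative answer, which is the assertion of Theorem~\ref{ThmIntroH10}.
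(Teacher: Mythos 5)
Your high-level template (a functional Pell equation whose only solutions in the ring are the expected polynomial ones, proved via growth estimates plus Ax--Schanuel, then a Denef-style interpretation of $\Z$) is the right one, and you correctly locate the technical core. But the specific encoding you propose has a fatal flaw: the theorem is stated for an \emph{arbitrary} subring $\Rcal'\subseteq\Rcal$ containing $z$, so $\Rcal'$ may be as small as $\Z[z]$ and need not contain $\exp(z)$ at all. Your copy of $\Z$ is the set $\{\exp(nz):n\in\Z\}$, which is simply not a subset of such $\Rcal'$, and your preliminary step of carving out $\{\exp(cz):c\in\C\}$ from the units collapses when the unit group of $\Rcal'$ is $\{\pm1\}$. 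The paper instead uses the Pell equation $x^2-(z^2-1)y^2=1$ with the \emph{polynomial} parameter $z$; its solutions $(\pm x_n,y_n)$, defined by $x_n+y_n\sqrt{z^2-1}=(z+\sqrt{z^2-1})^n$, lie in $\Z[z]$ and hence in every admissible $\Rcal'$. A second problem is your choice of Pell parameter $t=\cosh z$: then $t^2-1=\sinh^2 z$ is a perfect square in $\Rcal$, so the equation factors as $(x-y\sinh z)(x+y\sinh z)=1$ and degenerates into a statement about units rather than a genuine Pell equation; the whole point of the paper's choice $D=z^2-1$ is that it is squarefree, which is what makes the rigidity statement (same solutions over $\Rcal$ as over $\Z[z]$) both true and provable by the Borel--Carath\'eodory/Ax--Lindemann--Weierstrass argument on the Riemann surface $w^2=z^2-1$.

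Finally, your step (iii) -- recovering integer multiplication from a ``Pell recursion'' on exponents -- is left as a hope and is exactly the hard part your scheme cannot avoid. The paper sidesteps it entirely: since $y_n(1)=n$, the interpretation is the evaluation map $f\mapsto f(1)$ restricted to a p.e.\ definable subring $\Rcal'_\Z$ (functions congruent mod $(z-1)$ to an integer polynomial), and evaluation is a \emph{ring homomorphism}, so both $+$ and $\times$ on $\Z$ are pulled back for free from $+$ and $\times$ on $\Rcal'_\Z$. The only logical subtlety is that in $\Rcal$ the condition $f(1)=g(1)$ is not obviously equivalent to $(z-1)\mid(f-g)$ (the quotient may be entire without being an exponential polynomial), which is why the auxiliary set $\Rcal'_\Z$ is introduced; an analogous difficulty would hit your scheme when extracting $n$ from $\exp(nz)$, and you do not address it. As written, your proposal does not yield the theorem.
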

In favorable circumstances, we also have a strengthening of the previous result.
\begin{theorem} \label{ThmIntroDef} Let $\Rcal'$ be a subring of $\Rcal$ containing the variable $z$. If the ring of constants $\Rcal'_{\rm cst}=\C\cap \Rcal'$ is positive existentially definable in $\Rcal'$ over $\Lcal_z$, then $\Z$ is positive existentially definable in $\Rcal'$ over $\Lcal_z$. Furthermore, this is the case if $\Rcal'_{\rm cst}=\C$. In particular, $\Z$ is positive existentially definable in $\Rcal$ over the language $\Lcal_z$.
\end{theorem}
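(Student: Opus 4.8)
The plan is to deduce Theorem~\ref{ThmIntroDef} from Theorem~\ref{ThmIntroH10} together with the Pell-equation analysis underlying its proof, handling the three assertions in turn. First I would settle the ``furthermore'': if $\C\subseteq\Rcal'$ then $\Rcal'_{\rm cst}=\C$ is defined by the positive existential $\Lcal_z$-formula
\[
\mathrm{Cst}(x):\qquad x=0\ \vee\ x=1\ \vee\ \exists y\,\exists v\,\bigl(x\times y=1\ \wedge\ x\times v=v+1\bigr)
\]
(the last conjunct encoding $(x-1)v=1$): a witness to the third disjunct makes $x$ and $x-1$ units of $\Rcal'$, hence zero-free entire functions, so $x$ is an entire function omitting both $0$ and $1$ and therefore constant by the Little Picard theorem; conversely $c^{-1},(c-1)^{-1}\in\C\subseteq\Rcal'$ for $c\in\C\setminus\{0,1\}$, so all constants satisfy $\mathrm{Cst}$. (If one insists on a disjunction-free formula, replace the disjuncts by $x\times x\times y=x$ and $(x-1)\times(x-1)\times v=x-1$, using that $\Rcal'$ is an integral domain.)

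For the first assertion let $\mathrm{Cst}(x)$ now be any positive existential $\Lcal_z$-formula defining $\Rcal'_{\rm cst}$. Since $\Z\subseteq\C\cap\Rcal'=\Rcal'_{\rm cst}\subseteq\Rcal'$ and the operations of $\Rcal'$ restrict to the usual ones on this copy of $\Z$, it is enough to carve out $\{n\cdot 1_{\Rcal'}:n\in\Z\}$ by a positive existential $\Lcal_z$-formula. I would use the Pell equation $X^2-(z^2-1)Y^2=1$, with polynomial solutions $(\pm X_n,\pm Y_n)_{n\ge 0}$ given by $X_n+Y_n\sqrt{z^2-1}=(z+\sqrt{z^2-1})^n$; here $X_n,Y_n\in\Z[z]$ and $Y_n(1)=n$, so $(z-1)\mid Y_n-n$ in $\Z[z]$ (set $Y_{-n}:=-Y_n$, so that $Y_m(1)=m$ for all $m\in\Z$). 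Crucially, the proof of Theorem~\ref{ThmIntroH10} shows this equation has \emph{no} other solutions in $\Rcal$, hence none in $\Rcal'\subseteq\Rcal$. Granting this, I claim that
\[
\mathrm{Int}(c):\ \ \mathrm{Cst}(c)\wedge\exists X\,\exists Y\,\exists w\,\bigl(X\times X+Y\times Y=z\times z\times Y\times Y+1\ \wedge\ Y+w=c+z\times w\bigr)
\]
defines $\Z$ in $\Rcal'$: if $\mathrm{Int}(c)$ holds then $c$ is constant and $(X,Y)$ is a Pell solution over $\Rcal'$ with $Y-c=(z-1)w$, so evaluating at $z=1$ gives $c=Y(1)=\pm Y_n(1)=\pm n\in\Z$; conversely for $m\in\Z$ one takes $Y=Y_m$, $X=X_{|m|}$, $w=(Y_m-m)/(z-1)\in\Z[z]\subseteq\Rcal'$. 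The ``in particular'' then follows by applying this together with the ``furthermore'' to $\Rcal'=\Rcal$, since $\Rcal_{\rm cst}=\C$.

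The main obstacle is the classification of the solutions of $X^2-(z^2-1)Y^2=1$ in $\Rcal$, i.e.\ that they are all polynomial; this is the step where Analysis and Functional Transcendence enter, Ax--Schanuel being what excludes the spurious entire solutions of the shape $X=\cosh\!\bigl(2\sqrt{z^2-1}\,r(z)\bigr)$ with $r\in\C[z]\setminus\{0\}$ (together with the corresponding $Y=\sinh\!\bigl(2\sqrt{z^2-1}\,r(z)\bigr)/\sqrt{z^2-1}$), which solve the equation over the ring of entire functions but fail to be exponential polynomials. This is precisely what one imports from the proof of Theorem~\ref{ThmIntroH10}; relative to it, the additional content of Theorem~\ref{ThmIntroDef} is comparatively routine, namely that the congruence $Y\equiv c\pmod{z-1}$ is positive existential over $\Lcal_z$ with a witness $w$ available inside $\Rcal'$ (because $Y_m-m\in\Z[z]$ vanishes at $z=1$, so $w\in\Z[z]\subseteq\Rcal'$), and that the hypothesis on $\Rcal'_{\rm cst}$ is used exactly to pin down $Y(1)$ as an honest element $c\in\Rcal'$ rather than merely as a residue modulo $z-1$.
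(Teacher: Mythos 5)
Your proposal is correct and, for the main assertion, follows essentially the same route as the paper: your formula $\mathrm{Int}(c)$ is precisely the paper's positive existential definition of the set $\Rcal'_{\Z}$ (a Pell solution $(X,Y)$ of $X^2-(z^2-1)Y^2=1$ together with the congruence $Y\equiv c\pmod{z-1}$, witnessed inside $\Rcal'$ because $y_m-m\in\Z[z]$ vanishes at $z=1$) intersected with the assumed definition of the constants, and both arguments lean on Theorem \ref{ThmIntroPell} in exactly the same way to force $Y\in\Z[z]$ and hence $c=Y(1)\in\Z$. The only genuine divergence is in the ``furthermore'': you define $\C$ as $\{0,1\}$ together with those $x$ for which $x$ and $x-1$ are units, concluding by the Little Picard theorem that such an $x$ omits $0$ and $1$ and is therefore constant, whereas the paper uses the single equation $v^2=f^5+1$ and the fact that the genus-$2$ curve $y^2=x^5+1$ admits no non-constant meromorphic parametrization. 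Both devices are correct; your converse direction does require $\C\subseteq\Rcal'$ to supply the inverses $c^{-1}$ and $(c-1)^{-1}$, but that is exactly the hypothesis $\Rcal'_{\rm cst}=\C$, and your disjunction-free variant via $x^2y=x$ is legitimate since $\Rcal'$ is an integral domain. Nothing of substance separates the two treatments.
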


Theorems \ref{ThmIntroH10}  and \ref{ThmIntroDef} lie in the context of the study of model-theoretic aspects of rings which may  be constructed by arithmetic operations and composition from the usual functions one encounters in elementary algebra and calculus; this topic dates back at least to Tarski's high school algebra problem, see \cite{Wilkie}. An important case is the ring $\C[z]^E$ obtained from $\Rcal$ by closing under composition of functions; see for instance \cite{Dries1984}, \cite{HensonRubel}, and \cite{HRS}. Elements of $\C[z]^E$ are a more general kind of exponential polynomials, and $\Rcal$ is precisely the subring of $\C[z]^E$ consisting of those holomorphic functions $f\in \C[z]^E$ of finite order (in the sense of growth in complex analysis).

One of the major open problems in the area is the analogue of Hilbert's Tenth Problem for the ring $\Hcal_\C$ of entire holomorphic functions in one variable $z$ with coefficients in $\Z[z]$.  Equivalently, the problem is whether the positive existential theory of the ring $\Hcal_\C$ over the language $\Lcal_z$ is decidable. In more geometric terms, the question is equivalent to asking for an algorithm that takes as input algebraic varieties fibred over the afine line $\pi\colon X\to \A^1$  (all defined over $\Q$) and decides whether there is a complex holomorphic section of $\pi$.

Before discussing how our results fit into the context of Hilbert's tenth problem for $\Hcal_\C$, let us briefly recall some related results. The first order theory of $\Hcal_\C$ over $\Lcal_z$ is undecidable \cite{Robinson1951}. If instead of $\Hcal_\C$ one considers the ring of rigid analytic functions in one variable $z$ over a non-archimedean field $k$, then undecidability of the positive existential theory over $\Lcal_z$ is proved in \cite{LipshitzPheidas1995} when $k$ has characteristic $0$, and in \cite{GarciaFritzPasten2015} when $k$ has positive characteristic. A negative solution to the analogue for Hilbert's tenth problem for rings of complex holomorphic functions in at least two variables is proved in  \cite{PheidasVidaux2018}  over a language including the variables and a predicate for evaluation. Regarding (possibly transcendental) meromorphic functions, much less is known and we refer the reader to \cite{Vidaux2003, Pasten2017, PheidasVidaux2018}. See also  \cite{PheidasZahidi2008} for connections between these problems and questions in number theory. 

The positive existential theory of the ring of complex polynomials $\C[z]$ over $\Lcal_z$ is undecidable \cite{Denef1978}, and there is abundant literature on analogues of Hilbert's tenth problem for \emph{algebraic} function fields and their subrings (which we will not attempt to survey here), but the case of $\Hcal_\C$ offers additional difficulties. 

From a technical point of view, the difficulties in approaching Hilbert's  tenth problem for $\Hcal_\C$ with coefficients in $\Z[z]$ can be attributed to the exponential function $\exp\in \Hcal_\C$, which does not exist in the case of algebraic function fields or non-archimedean rigid entire functions, see \cite{PheidasZahidi2000} for details.  Theorems \ref{ThmIntroH10} and \ref{ThmIntroDef} can be regarded as a step forward in the direction of Hilbert's tenth problem for $\Hcal_\C$ with coefficients in $\Z[z]$, for a subring containing the exponential function. It is worth pointing out that $\Rcal$ is natural subring of $\Hcal_\C$ not just from the point of view of logic, but also from the point of view of value distribution; for instance, in \cite{GSW} it is shown that $\Rcal$ is radically closed in $\Hcal_\C$ by means of Nevanlinna theory.

The proofs of Theorems \ref{ThmIntroH10} and \ref{ThmIntroDef} involve an interaction between Arithmetic, Analysis, Logic, and Functional Transcendence. Let us briefly outline the structure of the argument.

First, the logical side builds on work of \cite{Denef1978} which concerns rings of polynomials and uses Pell equations; see \cite{PheidasZahidi2000} and the references therein for other cases where these ideas are used. However, in our case additional technical difficulties arise, essentially because $f(1)=0$ is not the same as $(z-1)|f$ in $\Rcal$.  

The functional Pell equation that we study is
\begin{equation}\label{Pell}
x^2-(z^2-1)y^2=1
\end{equation}
(in the unknowns $x$ and $y$) over $\Rcal$. The following theorem is our key technical result. 
\begin{theorem} \label{ThmIntroPell} 
Equation \eqref{Pell} has the same solutions over $\Rcal$ and over $\Z[z]$. 
\end{theorem}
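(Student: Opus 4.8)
## Proof Proposal for Theorem \ref{ThmIntroPell}

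The plan is to pass, via functional transcendence, from $\Rcal$ to suitable Laurent polynomial subrings, and there read off the solutions of \eqref{Pell} from the unit group; the solutions turn out to be Chebyshev polynomials, which happen to have integer coefficients.

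\emph{A uniform statement over Laurent polynomial rings.} For $m\ge 0$ let $L_m=\C[z,w_1^{\pm 1},\dots,w_m^{\pm 1}]$ be the Laurent polynomial ring in the algebraically independent variables $z,w_1,\dots,w_m$. I would first prove that every solution of \eqref{Pell} over $L_m$ has the form $(x,y)=(\varepsilon T_n(z),\varepsilon U_{n-1}(z))$ with $\varepsilon\in\{\pm1\}$ and $n\in\Z$, where $T_n,U_n$ are the Chebyshev polynomials of the first and second kind; in particular $x,y\in\Z[z]$. Let $W$ be an indeterminate with $W^2=z^2-1$. Since $z^2-1=(z-1)(z+1)$ is squarefree in the unique factorization domain $\C[z,w_1,\dots,w_m]$, it is not a square in $\mathrm{Frac}(L_m)$, so $L_m[W]$ is a domain; setting $u=z+W$ one gets $u^{-1}=z-W$, $z=(u+u^{-1})/2$, $W=(u-u^{-1})/2$, whence $L_m[W]\cong\C[u^{\pm1},w_1^{\pm1},\dots,w_m^{\pm1}]$. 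The units of a Laurent polynomial ring over $\C$ are the scalar multiples of monomials, so if $(x,y)$ solves \eqref{Pell} then $\xi:=x+yW$ is a unit, say $\xi=c\,u^{n}w_1^{b_1}\cdots w_m^{b_m}$, and applying the involution $\sigma\colon W\mapsto -W$ (which fixes $L_m$, fixes each $w_i$, and sends $u\mapsto u^{-1}$) yields $\xi\,\sigma(\xi)=c^2 w_1^{2b_1}\cdots w_m^{2b_m}=x^2-(z^2-1)y^2=1$. Hence $c=\pm1$ and every $b_i=0$, so $\xi=\pm u^{n}$; then $x=(\xi+\sigma(\xi))/2$ and $y=(\xi-\sigma(\xi))/(2W)$ are the asserted Chebyshev polynomials, which have integer coefficients. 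The case $m=0$ already gives Theorem~\ref{ThmIntroPell} over $\C[z]$, and a fortiori the description of the solutions over $\Z[z]$.

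\emph{Reduction from $\Rcal$ to some $L_m$.} Let $(x,y)\in\Rcal^2$ solve \eqref{Pell}; the cases $x=0$ or $y=0$ are immediate, so assume $xy\ne0$. Write $x=\sum_j p_j e^{q_j}$ and $y=\sum_k r_k e^{s_k}$ with $p_j,q_j,r_k,s_k\in\C[z]$. The subgroup of $\C[z]/\C$ generated by the classes of the $q_j$ and $s_k$ is finitely generated and torsion free, hence free; choose $\beta_1,\dots,\beta_m\in\C[z]$ whose classes form a basis of its $\Q$-span, so that each $q_j$ and each $s_k$ equals $c+\sum_i a_i\beta_i$ for some $c\in\C$ and $a_i\in\Z$. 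Put $w_i=e^{\beta_i}\in\Rcal$, so that $x,y\in\C[z,w_1^{\pm1},\dots,w_m^{\pm1}]\subseteq\Rcal$. The crucial point is that $z,w_1,\dots,w_m$ are algebraically independent over $\C$: because $\beta_1,\dots,\beta_m$ are $\Q$-linearly independent modulo $\C$, Ax's theorem, applied in the differential field of meromorphic functions with the standard derivation and constant field $\C$, gives $\trdeg_{\C}(\beta_1,\dots,\beta_m,e^{\beta_1},\dots,e^{\beta_m})\ge m+1$, while the fact that the $\beta_i$ are polynomials in $z$ forces this transcendence degree to be at most $1+m$, so $z,e^{\beta_1},\dots,e^{\beta_m}$ must be algebraically independent. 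Therefore the subring $\C[z,w_1^{\pm1},\dots,w_m^{\pm1}]$ of $\Rcal$ is isomorphic to $L_m$, and by the previous step $x,y\in\Z[z]$. This shows that every solution of \eqref{Pell} over $\Rcal$ already lies over $\Z[z]$, and the reverse inclusion of solution sets is trivial since $\Z[z]\subseteq\Rcal$; this is Theorem~\ref{ThmIntroPell}.

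\emph{Main obstacle.} The genuine difficulty, and the reason the statement is not elementary, is the algebraic independence of $z,w_1,\dots,w_m$: a priori the exponentials $e^{\beta_i}$ could satisfy polynomial relations with $z$, in which case $\C[z,w_1^{\pm1},\dots,w_m^{\pm1}]$ would be a proper quotient of $L_m$ and could carry spurious solutions of \eqref{Pell}. Excluding this is precisely the content of the Ax--Schanuel theorem, and is where functional transcendence becomes indispensable. The remaining ingredients — the unit group of a Laurent polynomial ring over $\C$, the fact that $z^2-1$ is not a square, and the bookkeeping with the $\Q$-basis $\beta_1,\dots,\beta_m$ — are routine.
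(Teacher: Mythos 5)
Your argument is correct, but it takes a genuinely different route from the paper's. The paper first classifies \emph{all} solutions of \eqref{Pell} over the full ring $\Hcal_\C$ via the exponential sheaf sequence on $B\simeq\C^\times$ (every solution satisfies $x+yw=\pm(z+w)^n\exp(hw)$ with $h\in\Hcal_\C$), then uses a Borel--Carath\'eodory inequality for algebroid functions to show that $h$ is a polynomial when $x,y$ have finite order, and finally Ax--Lindemann--Weierstrass on $B$ to force $h=0$ when $x,y\in\Rcal$. You bypass $\Hcal_\C$ and the analytic growth estimates entirely: you exploit the finite-sum presentation of elements of $\Rcal$ to place a given solution inside a subring $\C[z,w_1^{\pm1},\dots,w_m^{\pm1}]$ with $w_i=e^{\beta_i}$, invoke Ax's theorem in its differential-field form to certify that $z,e^{\beta_1},\dots,e^{\beta_m}$ are algebraically independent (so the subring is an honest Laurent polynomial ring), and then read off the solutions from the unit group of $L_m[W]\cong\C[u^{\pm1},w_1^{\pm1},\dots,w_m^{\pm1}]$. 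Both proofs reduce the Pell equation to norm-one units of a quadratic extension and both rest on Ax's functional transcendence, but they apply it at different points: the paper uses it to kill a transcendental factor $\exp(hw)$ left over from the analytic classification, while you use it to rule out spurious algebraic relations so that a purely algebraic unit computation applies. The paper's route buys the intermediate Theorem \ref{Analytic} (solutions over all of $\Hcal_\C$) and a growth statement of independent interest, and adapts to subrings of $\Hcal_\C$ cut out by growth conditions; yours is shorter and more algebraic, at the price of depending on the specific presentation of $\Rcal$ as finite sums $\sum_j p_j\exp(q_j)$. One small wording point: take a $\Z$-basis of the (free, finitely generated) subgroup of $\C[z]/\C$ generated by the classes of the exponents, rather than merely a basis of its $\Q$-span, so that the coefficients $a_i$ are genuinely integers; this is what you actually use and it is available, so nothing is lost.
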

Functional Pell equations with polynomial coefficients are those of the form $X^2-DY^2=1$, where $D\in \C[z]$ is a polynomial without multiple zeros. They are some of the oldest studied functional polynomial equations since Abel, see \cite{Zannier2014}, \cite{Zannier2014b}, and \cite{Kollar2019} for some recent developments. 

Equation \eqref{Pell} has an infinite number of \emph{polynomial} solutions, which can be given the structure of an abelian group isomorphic to $\Z\oplus (\Z/2\Z)$. These polynomial solutions are used in \cite{Denef1978} and elsewhere to approach Hilbert's tenth problem over various rings of functions. Roughly speaking, Theorem \ref{ThmIntroPell} allows us to approach Hilbert's tenth problem for $\Rcal$ using the Pell equation method, but there are some serious technical complications (as mentioned before). We explain the details in Section \ref{definabilitysection}. 

The proof of Theorem \ref{ThmIntroPell} is given in Section \ref{transcendencesection} and it has three main ingredients. First, we prove a theorem that gives a complete description of all solutions of Equation \eqref{Pell} in $\Hcal_\C$ in terms of functions in a suitable quadratic extension of $\Hcal_\C$ (Theorem \ref{Analytic}). Secondly, we use an extension of the Borel-Carath\'eodory theorem on an auxiliary Riemann surface which, together with Theorem \ref{Analytic}, allows us to severely restrict the kind of functions in $\Rcal$ that can appear as solutions of Equation \eqref{Pell} ---see Theorem \ref{Growth}. Finally, we use results from functional transcendence (a consequence of the Ax-Schanuel theorem) to show that the solutions of Equation \eqref{Pell} in $\Rcal$ after the restrictions imposed by Theorem \ref{Growth}, have a trivial transcendental part (Proposition \ref{PropAx}).

We finish this introduction by mentioning that there are other natural extensions of the ring language where one may consider a similar problem. For instance, one may  include in the language a predicate symbol for the non-constant functions, i.e. one may want to ask whether a given polynomial equation has or does not have solutions in $\Hcal_\C$ which are not all constant functions. This is an open question ---cf. \cite{PheidasZahidi2008}.\\

%
%
%


\noindent\textbf{Notation and basic facts.}
\begin{itemize}

\item $B$ is the Riemann surface associated to the curve $w^2=z^2-1$. Points in $B$ will be written in coordinates $(z,w)$, and $\pi$ will denote the projection $\pi(z,w)=z$. So, $B$ is a connected Riemann surface and $\pi\colon B\to \C$ is a surjective, proper holomorphic map of degree $2$.

\item $\Mcal_\C$ is the quotient field of $\Hcal_\C$, namely, the field of meromorphic functions on $\C$. 

\item $\Mcal_B$ is the field of complex of meromorphic functions on $B$. It is a quadratic extension of $\Mcal_\C$ by means of the inclusion $\pi^*\colon \Mcal_\C\to\Mcal_B$ defined by pull-back. Indeed, we have $\Mcal_B=\Mcal_\C(w)\simeq \Mcal_\C(\sqrt{z^2-1})$ where $w$ is the holomorphic function on $B$ defined by the $w$-coordinate, as this function satisfies $w^2=z^2-1$.  The extension $\Mcal_B/\Mcal_\C$ is Galois with non-trivial automorphism determined by $w\mapsto -w$. 

\item For $x,y\in\Mcal_\C$, we write $\Nr(x+yw)=(x+yw)(x-yw)$ for the norm of the quadratic extension $\Mcal_B/\Mcal_\C$. 

\item $\Hcal_B$ is the subring of $\Mcal_B$ of holomorphic functions on $B$. Every $h\in \Hcal_B$ can be written in a unique way as $f+gw$, with $f,g\in \Hcal_\C$. Thus, $\Hcal_B=\Hcal_\C[w]$.
\end{itemize}


\noindent\textbf{Acknowledgement.}

This project was initiated at the American Institute of Mathematics meeting ``Definability and decidability problems in number theory'', May 6 to May 10, 2019, San Jose, California. 

D. C., T. P. and X. V. did part of the research while cooperating in the framework of a European Union Erasmus+ project through the University of Crete, Greece and the University of Concepci\'on, Chile.

T. P. did part of the research while visiting University of Napoli Federico II, with an Erasmus exchange, and Institut Henri Poincar\'e during the thematic quarter ``Model Theory, Combinatorics and Valued Fields''. 

The hospitality of all these institutions is greatly appreciated. 

We thank the referee for several insightful comments, as well as technical remarks that led us to a more concise presentation of the paper.


\section{Analytic solutions of Pell's Equation}\label{analyticsection}

Let $t$ be the standard variable on the Riemann surface $\C^\times$ and let $\Hcal_{\C^\times}$ be the ring of complex holomorphic functions on $\C^\times$.
\begin{lemma}\label{cohom} We have $\Hcal_{\C^\times}^\times = \{t^n\cdot \exp(h)\colon n\in \Z \mbox{ and }h\in \Hcal_{\C^\times}\}$.
\end{lemma}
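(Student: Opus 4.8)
The plan is to prove the nontrivial inclusion $\Hcal_{\C^\times}^\times \subseteq \{t^n\cdot\exp(h) : n\in\Z,\ h\in\Hcal_{\C^\times}\}$; the reverse inclusion is immediate, since $t^n\exp(h)$ is nowhere vanishing on $\C^\times$ and has holomorphic inverse $t^{-n}\exp(-h)$. So let $f\in\Hcal_{\C^\times}^\times$, i.e.\ $f$ is holomorphic and nowhere zero on $\C^\times$; then its logarithmic derivative $f'/f$ is holomorphic on $\C^\times$. First I would extract the ``winding number'' of $f$: set
\[
n=\frac{1}{2\pi i}\oint_{|t|=1}\frac{f'(t)}{f(t)}\,dt,
\]
which is independent of the radius by Cauchy's theorem (as $f'/f$ has no singularities in $\C^\times$) and is an integer by the argument principle, since $f$ has no zeros or poles inside the annulus bounded by two concentric circles.

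Next, replace $f$ by $g=t^{-n}f\in\Hcal_{\C^\times}^\times$, so that $g'/g=f'/f-n/t$, and hence
\[
\frac{1}{2\pi i}\oint_{|t|=1}\frac{g'(t)}{g(t)}\,dt=n-n=0.
\]
Since $\C^\times$ is homotopy equivalent to a circle, $H_1(\C^\times;\Z)\cong\Z$ is generated by the loop $|t|=1$, so a holomorphic $1$-form on $\C^\times$ whose period along this generator vanishes is exact. Applying this to $(g'/g)\,dt$ yields $h\in\Hcal_{\C^\times}$ with $h'=g'/g$; concretely one may define $h$ by integrating $g'/g$ along paths from a fixed base point, path-independence being exactly the vanishing of the period computed above. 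Then $(g e^{-h})'=e^{-h}(g'-gh')=0$, so $g e^{-h}$ is a nonzero constant $c$; writing $c=\exp(c_0)$ and replacing $h$ by $h+c_0$ gives $g=\exp(h)$, whence $f=t^n\exp(h)$.

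The only mildly delicate point is the construction of the primitive $h$, which is where the topology of $\C^\times$ (as opposed to $\C$) enters; everything else is a routine application of Cauchy's theorem and the argument principle, and I would present the period/primitive step only in outline. One could alternatively phrase the whole argument cohomologically via the exponential sheaf sequence on $\C^\times$, but the elementary contour-integral argument above is self-contained and shorter.
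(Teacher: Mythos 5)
Your proof is correct and is essentially the paper's argument unwound into elementary complex analysis: the paper invokes the exponential sheaf sequence on $\C^\times$, whose connecting map to $H^1(\C^\times,\Z)\cong\Z$ is exactly the period $\frac{1}{2\pi i}\oint_{|t|=1} f'/f\,dt$ that you compute by hand, and $t$ generates the cokernel of $\exp$ because its period is $1$. One cosmetic quibble: the integrality of that period is best justified by noting it equals $\frac{1}{2\pi i}\int_{\sigma}\frac{dw}{w}$ for $\sigma$ the image loop $f(e^{i\theta})$, i.e.\ the winding number of a closed curve avoiding $0$, rather than by the argument principle on a region bounded by the circle (where $f$ need not be defined at $0$); with that reading your argument is complete.
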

\begin{proof} The exponential exact sequence of sheaves on $\C^\times$ gives the exact sequence in cohomology 
$$
(0)\to \Z\rightarrow \Hcal_{\C^\times}\xrightarrow{\exp} \Hcal_{\C^\times}^\times \xrightarrow{\varphi} H^1(\C^\times,\Z)=\Z,
$$
where $\varphi(f)=(2\pi i)^{-1}\oint_\gamma f'/f$ and $\gamma$ is the circle of radius $1$ around the origin positively oriented. Since $\varphi(t)=1$, we conclude that the class of $t$ in the multiplicative group $\Hcal_{\C^\times}^\times/\exp(\Hcal_{\C^\times})\simeq\Z$ is a generator.
\end{proof}
\begin{lemma}\label{LemmaUnitsB} We have $\Hcal_{B}^\times = \{(z+w)^n\cdot \exp(h): n\in \Z \mbox{ and }h\in \Hcal_{B}\}$.
\end{lemma}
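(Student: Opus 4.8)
The plan is to reduce Lemma \ref{LemmaUnitsB} to Lemma \ref{cohom} via an explicit biholomorphism $B\simeq\C^\times$ under which the function $z+w$ corresponds to the standard coordinate $t$.

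First I would write down the candidate isomorphism. Since $(z+w)(z-w)=z^2-w^2=1$ on $B$, the holomorphic function $z+w$ is nowhere vanishing on $B$, so it defines a holomorphic map $\psi=z+w\colon B\to\C^\times$. In the other direction, set $\Phi\colon\C^\times\to B$ by $\Phi(t)=\bigl(\tfrac12(t+t^{-1}),\tfrac12(t-t^{-1})\bigr)$; a direct computation shows the image lies on $B$, since the $z$- and $w$-coordinates of $\Phi(t)$ satisfy $z+w=t$ and $z-w=t^{-1}$, hence $z^2-w^2=1$. Then I would check $\psi\circ\Phi=\mathrm{id}_{\C^\times}$ and $\Phi\circ\psi=\mathrm{id}_B$: the first is immediate, and for the second, given $(z,w)\in B$ put $t=z+w$; then $t^{-1}=z-w$ because $(z+w)(z-w)=1$, so $\tfrac12(t+t^{-1})=z$ and $\tfrac12(t-t^{-1})=w$, i.e. $\Phi(t)=(z,w)$. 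Thus $\Phi$ and $\psi$ are mutually inverse biholomorphisms. Here I am using that $B$ is the smooth affine curve $\{w^2=z^2-1\}$ — smooth because $(z-1)(z+1)$ has simple roots, connected because $w^2-(z^2-1)$ is irreducible over $\C$ — which is consistent with the stated properties of $\pi\colon B\to\C$.

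Next, pulling back along $\Phi$ gives a ring isomorphism $\Phi^*\colon\Hcal_B\xrightarrow{\ \sim\ }\Hcal_{\C^\times}$, hence an isomorphism of unit groups $\Hcal_B^\times\xrightarrow{\ \sim\ }\Hcal_{\C^\times}^\times$. By construction $\Phi^*(z+w)=(z+w)\circ\Phi=t$. Now I would invoke Lemma \ref{cohom}: every unit of $\Hcal_{\C^\times}$ is of the form $t^n\exp(h)$ with $n\in\Z$ and $h\in\Hcal_{\C^\times}$. Transporting this description back through $(\Phi^*)^{-1}$, and using that $(\Phi^*)^{-1}(t)=z+w$ while $(\Phi^*)^{-1}$ commutes with $\exp$, yields $\Hcal_B^\times=\{(z+w)^n\exp(h)\colon n\in\Z,\ h\in\Hcal_B\}$, which is the claim.

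I do not expect a serious obstacle: the content of the lemma is essentially already in Lemma \ref{cohom}, and what remains is the identification $B\simeq\C^\times$ carrying $z+w$ to $t$. The only point needing a little care is to be sure that $B$ carries no points ``at infinity'' — i.e. that it really is the affine curve $\{w^2=z^2-1\}\cong\C^\times$ rather than its projective closure — which is guaranteed by the stipulation that $\pi\colon B\to\C$ is a proper holomorphic map of degree $2$ given in coordinates $(z,w)$. Alternatively, one could avoid the explicit parametrization and argue directly on $B$ as in Lemma \ref{cohom}: the exponential sheaf sequence gives $0\to\Z\to\Hcal_B\xrightarrow{\exp}\Hcal_B^\times\to H^1(B,\Z)\to H^1(B,\Ocal_B)$, where $H^1(B,\Ocal_B)=0$ since $B$ is an open (hence Stein) Riemann surface and $H^1(B,\Z)\cong\Z$ since $B$ is homotopy equivalent to a circle, and then check that the nowhere-vanishing function $z+w$ maps to a generator of $H^1(B,\Z)$, which again comes down to the same winding-number computation.
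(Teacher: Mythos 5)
Your proposal is correct and follows essentially the same route as the paper: the paper's proof simply observes that $\psi(z,w)=z+w$ is an isomorphism of Riemann surfaces $B\to\C^\times$ and then invokes Lemma \ref{cohom}. You have merely spelled out the inverse map and the transport of structure in more detail.
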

\begin{proof}
The map $\psi\colon B\to \C^\times$ given by $t=\psi(z,w)=z+w$ is an isomorphism of Riemann surfaces. So we can conclude with Lemma \ref{cohom}. 
\end{proof}

For $f,g\in\Hcal_\C$, the norm map $\Nr\colon\Hcal_B^\times \to \Hcal_\C^\times$ satisfies:
$$
\Nr\left((z+w)^n\exp(f+gw)\right)=(z+w)^n\exp(f+gw)\cdot(z-w)^n\exp(f-gw)=\exp(2f).
$$
\begin{lemma} \label{LemmaKerNr} We have $\ker(\Nr\colon\Hcal_B^\times \to \Hcal_\C^\times)=\{\pm (z+w)^n\exp(gw) \colon n\in \Z\mbox{ and }g\in \Hcal_\C\}$.
\end{lemma}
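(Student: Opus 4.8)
The plan is to combine Lemma \ref{LemmaUnitsB} with the explicit formula for the norm that was just recorded. First I would take an arbitrary unit $u\in\Hcal_B^\times$; by Lemma \ref{LemmaUnitsB} we may write $u=(z+w)^n\exp(f+gw)$ for some $n\in\Z$ and $f,g\in\Hcal_\C$ (using $\Hcal_B=\Hcal_\C[w]$ to split the exponent as $f+gw$). The displayed computation preceding the statement shows $\Nr(u)=\exp(2f)$. Hence $u\in\ker(\Nr)$ if and only if $\exp(2f)=1$, i.e.\ $2f$ is constant and lies in $2\pi i\Z$, so $f$ is a constant $c$ with $e^{2c}=1$, meaning $\exp(c)=\pm 1$.

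Substituting back, $u\in\ker(\Nr)$ exactly when $u=(z+w)^n\exp(c)\exp(gw)=\pm(z+w)^n\exp(gw)$ for some $n\in\Z$ and $g\in\Hcal_\C$; this is precisely the asserted description of the kernel. Conversely, every element of the form $\pm(z+w)^n\exp(gw)$ is a unit in $\Hcal_B$ (it has inverse $\pm(z-w)^n\exp(-gw)/(z^2-1)^{\,0}$—more simply, $(z+w)^{-n}$ makes sense as a holomorphic function on $B$ since $z+w=\psi$ is a nowhere-vanishing coordinate, and $\exp(-gw)$ is holomorphic), and the formula gives $\Nr(\pm(z+w)^n\exp(gw))=\exp(0)=1$, so it indeed lies in the kernel. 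This establishes both inclusions.

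There is essentially no obstacle here: the only point requiring a moment's care is the passage from $\exp(2f)=1$ on the connected Riemann surface $\C$ to $f$ being a constant with $\exp(f)=\pm1$. This is immediate because a holomorphic function on $\C$ with $\exp(2f)\equiv1$ has $2f$ locally constant, hence globally constant by connectedness, and the constant must be in $2\pi i\Z$; dividing by $2$ gives $f\in\pi i\Z$, i.e.\ $\exp(f)=\pm1$. The sign $\pm$ in the statement absorbs exactly this ambiguity.
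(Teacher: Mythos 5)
Your proof is correct and follows essentially the same route as the paper: apply Lemma \ref{LemmaUnitsB}, use the norm computation $\Nr((z+w)^n\exp(f+gw))=\exp(2f)$, and observe that $\exp(2f)=1$ forces $f\in\pi i\Z$, so $\exp(f)=\pm1$. The paper states this in one line; your version just spells out the same steps (including the easy converse inclusion) in more detail.
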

\begin{proof} For $f\in \Hcal_\C$, we have $\exp(2f)=1$ if and only if $f\in \pi i \Z$. We conclude by Lemma \ref{LemmaUnitsB}.
\end{proof}

Writing the elements of $\Hcal_B$ as $x+yw$ for $x,y\in \Hcal_\C$, we have $x+yw\in \ker(\Nr\colon\Hcal_B^\times \to \Hcal_\C^\times)$ if and only if $x^2-(z^2-1)y^2=\Nr(x+yw)=1$. Using Lemma \ref{LemmaKerNr}, this proves the following result. 

\begin{theorem} \label{Analytic} The solutions $(x,y)$ of Equation \eqref{Pell} over $\Hcal_\C$ are of the form
\begin{equation}\label{ana}
x+yw=\pm (z+w)^n \exp(hw)
\end{equation}
where $n$ is a rational integer and $h\in \Hcal_\C$. 
\end{theorem}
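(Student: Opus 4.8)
The plan is to translate Equation \eqref{Pell} into a statement about the norm form of the quadratic extension $\Hcal_B/\Hcal_\C$ and then quote Lemma \ref{LemmaKerNr}.

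First I would set up the dictionary. By the facts recalled in the Notation section, every element of $\Hcal_B$ is uniquely of the shape $x+yw$ with $x,y\in\Hcal_\C$, and for such an element $\Nr(x+yw)=(x+yw)(x-yw)=x^2-(z^2-1)y^2$ because $w^2=z^2-1$. Hence a pair $(x,y)\in\Hcal_\C^2$ is a solution of \eqref{Pell} if and only if the element $u:=x+yw\in\Hcal_B$ satisfies $\Nr(u)=1$. Next I would observe that the condition $\Nr(u)=1$ already forces $u\in\Hcal_B^\times$: the identity $u\cdot(x-yw)=1$ exhibits $x-yw\in\Hcal_B$ as an inverse of $u$ (note that $x-yw$ is the image of $u$ under the nontrivial element of $\Gal(\Mcal_B/\Mcal_\C)$, which maps $\Hcal_B$ into itself). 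Consequently $u$ lies in the kernel of the restricted norm map $\Nr\colon\Hcal_B^\times\to\Hcal_\C^\times$, and Lemma \ref{LemmaKerNr} applies: $u=\pm(z+w)^n\exp(gw)$ for some $n\in\Z$ and $g\in\Hcal_\C$. Taking $h=g$ yields the claimed form \eqref{ana}.

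For the converse inclusion I would simply invoke the norm computation displayed just before Lemma \ref{LemmaKerNr}, which gives $\Nr\big((z+w)^n\exp(f+gw)\big)=\exp(2f)$; with $f=0$ this shows $\Nr\big(\pm(z+w)^n\exp(hw)\big)=1$. I would also note that any such expression genuinely lies in $\Hcal_B$ (for $n<0$ use $(z+w)^{-1}=z-w$, which holds since $(z+w)(z-w)=z^2-w^2=1$), so writing it uniquely as $x+yw$ with $x,y\in\Hcal_\C$ produces a solution of \eqref{Pell}. Thus the solution set of \eqref{Pell} over $\Hcal_\C$ is exactly the set of elements of the form \eqref{ana}.

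I do not expect a serious obstacle here: all the analytic content — the computation of $\Hcal_{\C^\times}^\times$ from the exponential sheaf sequence, its transport along the isomorphism $B\simeq\C^\times$, and the determination of the norm kernel — has already been carried out in Lemmas \ref{cohom}, \ref{LemmaUnitsB}, and \ref{LemmaKerNr}. The only points that deserve an explicit line are the uniqueness of the representation $u=x+yw$ (so that the correspondence $(x,y)\leftrightarrow u$ is well defined in both directions) and the remark that $\Nr(u)=1$ already implies invertibility of $u$ inside $\Hcal_B$, not merely inside $\Mcal_B$ — this is what legitimizes applying Lemma \ref{LemmaKerNr}, which is stated for $\Hcal_B^\times$.
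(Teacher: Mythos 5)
Your proposal is correct and follows essentially the same route as the paper, which likewise identifies solutions of \eqref{Pell} with elements of $\ker(\Nr\colon\Hcal_B^\times\to\Hcal_\C^\times)$ and then applies Lemma \ref{LemmaKerNr}. The extra details you spell out (that $\Nr(u)=1$ already forces $u\in\Hcal_B^\times$, and the uniqueness of the representation $x+yw$) are exactly the points the paper leaves implicit, and they are handled correctly.
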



\section{Growth}\label{growthsection}

Given an open, connected Riemann surface $S$, a proper degree $n$ holomorphic map $p:S\to \C$, a point $z_0\in \C$, and a holomorphic function $h:S\to \C$, we define the following functions for $r\ge 0$:
$$
M_{S,z_0}(h,r)=\max_{|p(s)-z_0|\le r} |h(s)|, \quad A_{S,z_0}(h,r) = \max_{|p(s)-z_0|\le r} \Re(h(s))
$$
where $\Re$ denotes the real part. In the classical case when $p:S\to \C$ is taken as $\mathrm{Id}_\C:\C\to \C$ and $z_0=0$, we denote these maps simply by $M_\C(h,r)$ and $A_\C(h,r)$.

We will need the following version of the Borel-Carath\'eodory theorem, which follows from Corollaire 7 in \cite{YChen} under the usual convention that a holomorphic map $h:S\to \C$ (with the previous notation) can be seen as a holomorphic multivalued function on $\C$ via the rule $z\mapsto \{h(s) : s\in p^{-1}(z)\}$ (multivalued functions of this type are classically known as algebroid maps.)

\begin{lemma}[Borel-Carath\'eodory for algebroid functions]\label{LemmaBC} Let $S$ be an open, connected Riemann surface having a proper, degree $n$ holomorphic map $p:S\to \C$. Let $h:S\to \C$ be a holomorphic function and let $z_0\in \C$ be such that $\{h(s) : s\in p^{-1}(z_0)\}=\{0\}$.  For all $0<r<R$ we have
$$
M_{S,z_0}(h,r) \le C_n(r,R)\cdot A_{S,z_0}(h,R),
$$
where $C_n(r,R)=2\left((R/r)^{1/n} -1\right)^{-1}$.
\end{lemma}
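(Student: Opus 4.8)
The plan is to reduce to the classical Borel--Carath\'eodory inequality by the usual M\"obius trick, the only new ingredient being an \emph{algebroid Schwarz lemma} that converts the ramification of $p$ into the exponent $1/n$. (Indeed this is exactly the content of Corollaire~7 of \cite{YChen} once one regards $h$ as an algebroid function on $\C$ via the rule $z\mapsto\{h(s):s\in p^{-1}(z)\}$; what follows is an outline of a self-contained argument.) After translating the target plane I may assume $z_0=0$, so $h$ vanishes on the fiber $p^{-1}(0)$, which is nonempty because a proper nonconstant holomorphic map to $\C$ is surjective. Write $A=A_{S,0}(h,R)$ and $D_\rho=\{z\in\C:|z|<\rho\}$. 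If $A=0$ then $\Re h\le 0$ on $p^{-1}(\overline{D_R})$ while $\Re h$ attains the value $0$ somewhere on $p^{-1}(0)$, so the maximum principle for the harmonic function $\Re h$ on the connected surface $S$ forces $h\equiv 0$ and the estimate is trivial; hence I may assume $A>0$.

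Next I would set $g=h/(2A-h)$. Since $\Re(2A-h)=2A-\Re h\ge A>0$ on $p^{-1}(\overline{D_R})$, the function $g$ is holomorphic on $S_R:=p^{-1}(D_R)$; the elementary equivalence $|w|\le|2A-w|\iff\Re w\le A$ (for $A>0$) gives $|g|\le 1$ on $S_R$; and $g$ vanishes on $p^{-1}(0)$. The key claim is that $|g(s)|\le(|p(s)|/R)^{1/n}$ for all $s\in S_R$. Granting this, one inverts the M\"obius map, $h=2Ag/(1+g)$, so that $|h(s)|\le 2A|g(s)|/(1-|g(s)|)$; when $|p(s)|\le r$ this forces $|g(s)|\le(r/R)^{1/n}<1$, and since $t\mapsto 2At/(1-t)$ is increasing on $[0,1)$ one gets $|h(s)|\le 2A(r/R)^{1/n}/(1-(r/R)^{1/n})=C_n(r,R)\,A$; taking the maximum over $|p(s)|\le r$ yields the lemma.

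To establish the claim I would argue with the function $v:=\log|g|-\tfrac1n\log(|p|/R)$ on $S_R$. Away from $p^{-1}(0)$ it is subharmonic, being the subharmonic function $\log|g|$ minus the harmonic function $\tfrac1n\log|p|$ plus a constant. Near a point $s_i\in p^{-1}(0)$ one picks a local coordinate $\zeta$ with $\zeta(s_i)=0$ in which $p$ vanishes to the ramification order $e_i$; writing $m_i=\ord_{s_i}g\ge 1$ gives $v=(m_i-e_i/n)\log|\zeta|+(\text{harmonic})$ near $s_i$, and $m_i-e_i/n\ge 0$ because $m_i\ge 1$ and $e_i\le n$, so $v$ extends subharmonically across each $s_i$. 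Finally, for $0<\varepsilon<R$ the set $p^{-1}(\overline{D_{R-\varepsilon}})$ is compact (properness of $p$) with topological boundary inside $p^{-1}(\{|z|=R-\varepsilon\})$, where $v\le\tfrac1n\log\tfrac{R}{R-\varepsilon}$ by $|g|\le 1$; the maximum principle then bounds $v$ by $\tfrac1n\log\tfrac{R}{R-\varepsilon}$ on that set, and letting $\varepsilon\to 0$ gives $v\le 0$ on $S_R$, i.e.\ the claim.

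The hard part is the algebroid Schwarz lemma of the previous paragraph: one must check that $v$ is subharmonic exactly at the branch points of $p$ over $z_0$, where the logarithmic singularities of $\log|g|$ and of $-\tfrac1n\log|p|$ compete and the bound $m_i-e_i/n\ge 0$ is precisely what preserves subharmonicity --- the borderline being total ramification $e_i=n$ together with a simple zero $m_i=1$, which also indicates why the constant cannot be taken smaller than $C_n(r,R)$ --- and one must control the behavior of $v$ as $p(s)$ tends to the circle $|z|=R$, which is where properness of $p$ enters. The remaining steps are just the classical Borel--Carath\'eodory argument.
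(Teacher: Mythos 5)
Your proof is correct, and it is genuinely different in character from what the paper does: the paper gives no proof at all, deducing the lemma directly from Corollaire~7 of the cited work of Y.~Chen on analytic multifunctions, whereas you supply a complete self-contained argument. Your route is the classical Borel--Carath\'eodory scheme (pass to $g=h/(2A-h)$, apply a Schwarz-type bound, invert the M\"obius map), with the one genuinely new ingredient being the algebroid Schwarz lemma $|g(s)|\le(|p(s)|/R)^{1/n}$; your verification of it is sound: $v=\log|g|-\tfrac1n\log(|p|/R)$ is subharmonic away from $p^{-1}(0)$, extends subharmonically across each $s_i\in p^{-1}(0)$ because $m_i-e_i/n\ge 1-n/n=0$ (using that the local multiplicities in a fiber of a proper degree-$n$ map sum to $n$, so $e_i\le n$), and the exhaustion by the compact sets $p^{-1}(\overline{D_{R-\varepsilon}})$, whose topological boundary lies over $|z|=R-\varepsilon$, gives $v\le 0$ in the limit. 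The algebra $2Aq/(1-q)=2\bigl((1/q)-1\bigr)^{-1}A$ with $q=(r/R)^{1/n}$ reproduces exactly the constant $C_n(r,R)$. The only imprecision is in your degenerate case $A=0$: you invoke the maximum principle ``on the connected surface $S$'', but $\Re h\le 0$ is only known on $p^{-1}(\overline{D_R})$, and $p^{-1}(D_R)$ need not be connected; the fix is immediate, since each connected component of $p^{-1}(D_R)$ maps properly, hence surjectively, onto $D_R$ and therefore contains a point of $p^{-1}(0)$ where the interior maximum $\Re h=0$ is attained, forcing $h\equiv 0$ on that component. With that repair your argument is a valid standalone proof of the lemma, which is arguably a useful addition given that the paper relies on an external reference here.
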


We prove the following consequence: 

\begin{lemma}\label{LemmaKeyGrowth}
 Let $f,g,h\in \Hcal_\C$ be such that $\exp(hw)=f+gw$ as holomorphic functions on $B$. Then for all  $r\ge 74$ we have
$$
M_\C(h,r) \le \frac{6}{r}\log \max\{M_\C(f,2r),M_\C(g,2r)\} + \frac{12\log r}{r}.
$$
\end{lemma}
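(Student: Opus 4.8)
The relation $\exp(hw)=f+gw$ on $B$ together with its conjugate (apply $w\mapsto -w$) gives $\exp(-hw)=f-gw$, so multiplying yields $\exp(hw)\exp(-hw)=1=f^2-g^2w^2=f^2-(z^2-1)g^2$, i.e. $(f,g)$ is a solution of Pell's equation; in particular $(z+w)\mid 1$ is forced in the sense that the unit $\exp(hw)$ has trivial $(z+w)$-valuation, consistent with Lemma~\ref{LemmaKerNr}. The point is to extract growth information about $h$ from the growth of $f$ and $g$. The plan is to apply the algebroid Borel--Carath\'eodory lemma (Lemma~\ref{LemmaBC}) to the holomorphic function $hw$ on the Riemann surface $B$, with $p=\pi\colon B\to\C$ of degree $n=2$, and with base point $z_0$ a branch point of $\pi$, namely $z_0=1$ (or $z_0=-1$), where $w$ vanishes and hence $\{(hw)(s):s\in\pi^{-1}(z_0)\}=\{0\}$, so the hypothesis of Lemma~\ref{LemmaBC} is satisfied.

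First I would bound $A_{B,1}(hw,R)=\max_{|\pi(s)-1|\le R}\Re\big(h(s)w(s)\big)$. Since $\exp(hw)=f+gw$, we have $\Re(hw)=\log|\exp(hw)|=\log|f+gw|\le \log\big(|f|+|g||w|\big)$. On the region $|z-1|\le R$ we can bound $|w|=\sqrt{|z^2-1|}=\sqrt{|z-1||z+1|}$; for $R$ in the relevant range $|z+1|\le |z-1|+2\le R+2$, so $|w|\le\sqrt{R(R+2)}\le R+1$ say, and $|f|,|g|$ are bounded by $M_\C(f,1+R)$ and $M_\C(g,1+R)$ respectively. This gives roughly $A_{B,1}(hw,R)\le \log\big((R+2)\max\{M_\C(f,1+R),M_\C(g,1+R)\}\big)$, up to harmless additive constants. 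Then Lemma~\ref{LemmaBC} with degree $n=2$ and a suitable pair $r<R$ gives $M_{B,1}(hw,r)\le C_2(r,R)\cdot A_{B,1}(hw,R)$ with $C_2(r,R)=2\big((R/r)^{1/2}-1\big)^{-1}$.

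Next I would convert $M_{B,1}(hw,r)$ into a bound for $M_\C(h,\rho)$ for an appropriate radius $\rho$. We have $M_{B,1}(hw,r)=\max_{|\pi(s)-1|\le r}|h(s)w(s)|$, and to divide by $w$ we need a lower bound for $|w|$, which forces us to stay away from the branch points $z=\pm1$: on the annular region $\delta\le |z-1|$, $|z+1|\ge\text{something}$ we get $|w|\ge c$, hence $M_\C(h,\text{inner radius})\le c^{-1}M_{B,1}(hw,r)$. The radii $r$, $R$, the constant shifts by $1$, and the numerical threshold $r\ge 74$ all exist to make the elementary inequalities $|z+1|\le R+2$, $|w|\ge$ const, and $C_2(r,R)\le 6/r$ (for the choice $R=$ some fixed multiple of $r$, e.g. chosen so that $(R/r)^{1/2}-1\approx r/3$) hold simultaneously and to absorb the additive constants into the $\tfrac{12\log r}{r}$ term; working out the exact admissible constants is the routine but fiddly part. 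The main obstacle is precisely this bookkeeping: choosing $R$ as an explicit function of $r$ so that $C_2(r,R)\le 6/r$, while keeping $1+R\le 2r$ so that $M_\C(f,1+R)\le M_\C(f,2r)$ and likewise for $g$, and simultaneously keeping $|w|$ bounded below on the region over which we read off $M_\C(h,r)$ — the constraints pull in opposite directions and the threshold $r\ge 74$ is what makes them compatible. Once the radii are pinned down, the stated inequality follows by combining the three displayed estimates.
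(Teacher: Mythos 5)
Your overall strategy coincides with the paper's: apply Lemma~\ref{LemmaBC} to $hw$ centred at the branch point $z_0=1$ (where $hw$ vanishes on the whole fibre), bound $A_{B,1}(hw,\cdot)$ via $\Re(hw)=\log|f+gw|\le\log(|f|+|g||w|)$, and then divide by $w$ to recover $h$. But there is a genuine gap in the quantitative part, which is exactly the part you defer as ``routine but fiddly'': you locate the factor $6/r$ in the Borel--Carath\'eodory constant, proposing to choose $R$ so that $C_2(r,R)\le 6/r$, i.e.\ $(R/r)^{1/2}-1\approx r/3$. That forces $R\approx r^3/9$, which is incompatible with your other constraint $1+R\le 2r$ (needed so that the right-hand side involves $M_\C(f,2r)$ and $M_\C(g,2r)$). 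No threshold on $r$ reconciles these: if $R<2r$ then $(R/r)^{1/2}-1<\sqrt{2}-1$, so $C_2(r,R)>2(\sqrt{2}+1)\approx 4.83$, while $6/r\le 6/74$ for $r\ge 74$. With $R$ comparable to $2r$ the Borel--Carath\'eodory constant is simply bounded (the paper uses $C_2(r+1,2r-1)\le 5$); it never decays in $r$.

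The factor $1/r$ actually comes from the step you treat as contributing only a constant: the division by $w$. By the maximum modulus principle, $M_\C(h,r)$ is attained on the circle $|z|=r$, and there $|w|=\sqrt{|z^2-1|}\ge\sqrt{r^2-1}$, so $M_{B,0}(hw,r)\ge M_\C(h,r)\sqrt{r^2-1}$; dividing by $\sqrt{r^2-1}\sim r$ is what produces the $6/r$. Your sketch instead bounds $|w|$ below by an unspecified constant $c$ on an annulus avoiding $z=\pm 1$, which can only yield $M_\C(h,r)\le C\log\max\{M_\C(f,2r),M_\C(g,2r)\}+\cdots$ with $C$ bounded. That weaker bound still shows $h$ is a polynomial when $f,g$ have finite order, but it is not the stated inequality and does not give the degree drop $\lfloor\beta\rfloor-1$ in Theorem~\ref{Growth}. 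Note also that the disc $|z|\le r$ always contains the branch points, so ``staying away from them'' must be implemented via the maximum principle (evaluating on the outer circle), not by restricting to a subregion, and this is the point your write-up leaves unaddressed.
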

\begin{proof} We will be using the previous definitions with $p:S\to \C$ taken as $\mathrm{Id}_\C:\C\to \C$ and $\pi:B\to \C$. First of all, we note that for any $\rho>0$ we have
$$
M_{B,0}(w,\rho)=\max_{|\pi(b)|=\rho}|w(b)| =\max_{|z|=\rho} \sqrt{|z^2-1|}= \sqrt{\rho^2+1}.
$$

For each $b\in B$ we have 
$$
|f(\pi(b))+g(\pi(b))w(b)|=|\exp( h(\pi(b)))w(b)|=\exp \Re\left(h(\pi(b))w(b)\right)
$$
from which we deduce
$$
\begin{aligned}
\exp \left(A_{B,0}(hw,\rho)\right)&=M_{B,0}(f+gw,\rho)\\
&\le M_{B,0}(f,\rho)+M_{B,0}(g,\rho) \cdot M_{B,0}(w,\rho)\\
&= M_\C(f,\rho)+M_\C(g,\rho)\sqrt{\rho^2+1}.
\end{aligned}
$$
Hence,
\begin{equation}\label{EqnBC1}
A_{B,0}(hw,\rho) \le \log \max\{ M_\C(f,\rho), M_\C(g,\rho) \} + \log (2\sqrt{\rho^2+1}).
\end{equation}

Let $r\ge74$. Note that $hw$ satisfies $\{(hw)(b) : \pi(b)=1\}=\{0\}$. Lemma \ref{LemmaBC} applied to $hw:B\to \C$ with $n=2$ and $z_0=1$ gives
$$
M_{B,1}(hw,r+1) \le C_2(r+1,2r-1)\cdot A_{B,1}(hw,2r-1) \le 5\cdot A_{B,1}(hw,2r-1)
$$
where we used the bound $C_2(r+1,2r-1)\le 5$, which is valid because we chose $r\ge 74$. 

Writing $D(z_0,r)=\{z\in \C : |z-z_0|\le r\}$ we observe that $D(0,r)\subseteq D(1,r+1)$ and $D(1,2r-1)\subseteq D(0,2r)$. These inclusions together with the fact that holomorphic functions and their real parts are harmonic, give the bounds 
$$
M_{B,0}(hw,r)\le M_{B,1}(hw,r+1) \le 5\cdot A_{B,1}(hw,2r-1)  \le 5\cdot A_{B,0}(hw,2r)
$$
and from the bound \eqref{EqnBC1} with $\rho=2r$ we conclude
$$
M_{B,0}(hw,r)\le 5\left( \log \max\{ M_\C(f,2r), M_\C(g,2r) \} + \log (2\sqrt{4r^2+1})\right).
$$
Notice that we have
$$
M_{B,0}(hw,r)\ge M_{B,0}(h,r)\cdot \inf_{|\pi(b)|=r}|w(b)|=M_\C(h,r)\cdot \sqrt{r^2-1}  
$$
and 
$$
\frac{5}{\sqrt{r^2-1}}<\frac{6}{r} \quad \mbox{and} \quad 2\sqrt{4r^2+1}<  r^2.
$$
Finally, we get
$$
\begin{aligned}
M_\C(h,r)& \le \frac{5}{\sqrt{r^2-1}} \log \max\{ M_\C(f,2r), M_\C(g,2r) \} + \frac{5\cdot \log (2\sqrt{4r^2+1})}{\sqrt{r^2-1}}\\
&\le \frac{6}{r} \log \max\{ M_\C(f,2r), M_\C(g,2r) \} + \frac{12 \log r}{r}.
\end{aligned}
$$
\end{proof}
\begin{lemma}\label{LemmaCauchyBd} Let $h\in \Hcal_\C$. Let $\alpha\ge 0$ be a real number. If $M_\C(h,r)=O(r^\alpha)$, then $h$ is a polynomial of degree at most $\lfloor \alpha\rfloor$.
\end{lemma}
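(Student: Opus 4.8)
The plan is to use the Taylor expansion of $h$ at the origin together with Cauchy's estimates. Since $h\in\Hcal_\C$ is entire, write $h(z)=\sum_{n\ge 0}a_nz^n$, the power series converging on all of $\C$. First I would invoke Cauchy's integral formula for the coefficients: for every $r>0$,
$$
a_n=\frac{1}{2\pi i}\oint_{|z|=r}\frac{h(z)}{z^{n+1}}\,dz,
$$
which, by estimating the integrand on the circle $|z|=r$, gives $|a_n|\le M_\C(h,r)\,r^{-n}$. By hypothesis there are constants $C>0$ and $r_0>0$ with $M_\C(h,r)\le Cr^\alpha$ for all $r\ge r_0$, hence $|a_n|\le Cr^{\alpha-n}$ for all $r\ge r_0$.

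Next I would let $r\to\infty$. For any integer $n$ with $n>\alpha$ the exponent $\alpha-n$ is negative, so $Cr^{\alpha-n}\to 0$ and therefore $a_n=0$. An integer $n$ satisfies $n>\alpha$ exactly when $n\ge\lfloor\alpha\rfloor+1$, so we conclude $a_n=0$ for all $n\ge\lfloor\alpha\rfloor+1$; equivalently $h(z)=\sum_{n=0}^{\lfloor\alpha\rfloor}a_nz^n$ is a polynomial of degree at most $\lfloor\alpha\rfloor$, as claimed.

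I do not expect a genuine obstacle here: the argument uses nothing beyond the entireness of $h$ and the standard Cauchy coefficient bound. The only point that needs a moment of care is the bookkeeping between the strict inequality $n>\alpha$ produced by the limiting argument and the floor $\lfloor\alpha\rfloor$ appearing in the statement, but as noted these match on the integers, so the two formulations coincide (and the bound is sharp, e.g.\ $h(z)=z^{\lfloor\alpha\rfloor}$ when $\alpha$ is not an integer).
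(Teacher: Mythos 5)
Your argument is correct and is exactly the one the paper has in mind: the paper's proof is the single line ``immediate from Cauchy's estimate for Taylor coefficients,'' and you have simply written out that estimate, the limit $r\to\infty$, and the bookkeeping $n>\alpha \iff n\ge\lfloor\alpha\rfloor+1$. Nothing to add.
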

\begin{proof} This is immediate from Cauchy's estimate for Taylor coefficients.
\end{proof}

We recall that a holomorphic function $f\in \Hcal_\C$ is said to have \emph{finite order} if for some real number $\alpha>0$ one has $M_\C(f,r)=O(\exp(r^\alpha))$. The infimum of all such numbers $\alpha$ is the \emph{order} of $f$.  From the previous two lemmas we deduce:

\begin{theorem} \label{Growth}
Let $f,g,h\in \Hcal_\C$ be such that $\exp(hw)=f+gw$ as holomorphic functions on $B$. If $f$ and $g$ have order at most $\beta\ge1$, then $h\in\C[z]$ is a polynomial of degree at most $\lfloor\beta\rfloor-1$. In particular, if $f$ and $g$ are of finite order, then $h\in \C[z]$. 
\end{theorem}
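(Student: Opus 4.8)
\textbf{Proof plan for Theorem \ref{Growth}.}

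The plan is to combine Lemma \ref{LemmaKeyGrowth} with Lemma \ref{LemmaCauchyBd}, using the finite-order hypothesis on $f$ and $g$ to turn the logarithmic bound on $M_\C(h,r)$ into polynomial growth, and then reading off the degree. First I would invoke the hypothesis that $f$ and $g$ have order at most $\beta$: by definition of order, for every $\varepsilon>0$ we have $M_\C(f,r)=O(\exp(r^{\beta+\varepsilon}))$ and $M_\C(g,r)=O(\exp(r^{\beta+\varepsilon}))$, hence $\log\max\{M_\C(f,2r),M_\C(g,2r)\}=O((2r)^{\beta+\varepsilon})=O(r^{\beta+\varepsilon})$. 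Feeding this into the inequality of Lemma \ref{LemmaKeyGrowth}, valid for $r\ge 74$, gives
$$
M_\C(h,r)\le \frac{6}{r}\cdot O(r^{\beta+\varepsilon}) + \frac{12\log r}{r} = O(r^{\beta-1+\varepsilon}),
$$
since the first term dominates (as $\beta\ge 1$, so $\beta-1+\varepsilon\ge 0 > -1$, and $\frac{\log r}{r}\to 0$). Thus $M_\C(h,r)=O(r^{\beta-1+\varepsilon})$ for every $\varepsilon>0$.

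Next I would apply Lemma \ref{LemmaCauchyBd} with exponent $\alpha=\beta-1+\varepsilon$ to conclude that $h$ is a polynomial of degree at most $\lfloor\beta-1+\varepsilon\rfloor$. To pin down the degree, choose $\varepsilon>0$ small enough that $\lfloor\beta-1+\varepsilon\rfloor=\lfloor\beta\rfloor-1$; this is possible because for any real $\beta\ge 1$ there is a threshold below which adding $\varepsilon$ does not cross the next integer — precisely, if $\beta\notin\Z$ take $\varepsilon<\lceil\beta\rceil-\beta$ so that $\lfloor\beta-1+\varepsilon\rfloor=\lfloor\beta\rfloor-1=\lceil\beta\rceil-1-1$... here one must be slightly careful, so I would handle the cases $\beta\in\Z$ and $\beta\notin\Z$ separately. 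If $\beta\notin\Z$, then $\lfloor\beta-1\rfloor=\lfloor\beta\rfloor-1$ and for $\varepsilon$ small $\lfloor\beta-1+\varepsilon\rfloor$ stays equal to $\lfloor\beta\rfloor-1$. If $\beta\in\Z$, then $\lfloor\beta-1+\varepsilon\rfloor=\beta-1=\lfloor\beta\rfloor-1$ already for all $0<\varepsilon<1$. Either way, $h\in\C[z]$ has degree at most $\lfloor\beta\rfloor-1$, as claimed.

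Finally, the last sentence of the theorem follows immediately: if $f$ and $g$ are of finite order, then they have order at most $\beta$ for some $\beta\ge 1$ (enlarging $\beta$ if necessary), so the first part applies and $h\in\C[z]$. I do not expect any genuine obstacle here — the substance is entirely in Lemma \ref{LemmaKeyGrowth} and Lemma \ref{LemmaBC}, which are assumed; the only point requiring a moment's care is the bookkeeping with $\varepsilon$ and the floor function when $\beta$ is an integer versus non-integer, and the observation that the $\frac{12\log r}{r}$ error term is negligible compared to $r^{\beta-1+\varepsilon}$.
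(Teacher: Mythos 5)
Your proposal is correct and follows essentially the same route as the paper, which derives Theorem \ref{Growth} directly from Lemma \ref{LemmaKeyGrowth} and Lemma \ref{LemmaCauchyBd} without further comment. The $\varepsilon$-bookkeeping you carry out (checking that $\lfloor\beta-1+\varepsilon\rfloor=\lfloor\beta\rfloor-1$ for small $\varepsilon$, in both the integer and non-integer cases) is exactly the omitted verification, and it is handled correctly.
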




\section{Functional transcendence and proof of Theorem \ref{ThmIntroPell}}\label{transcendencesection}

In this section, we prove Theorem \ref{ThmIntroPell}. 

Let $K$ be a field containing $\mathbb{C}$. We say that $a_1,\dots,a_n\in K$ are linearly independent over $\mathbb{Q}$ modulo constants if there is no non-trivial relation
$$
\lambda_1a_1+\dots+\lambda_na_n=c
$$
with $c\in\mathbb{C}$ and $\lambda_j\in\mathbb{Q}$ for each $j$. 

The following result is a well known consequence of Ax's theorem \cite{Ax1971} concerning Schanuel's conjecture on power series ---see \cite{Pila2013} or \cite{Pila2015}. 

\begin{theorem}[Ax-Lindemann-Weierstrass theorem]\label{ALW} Let $W$ be an algebraic variety over $\C$, with function field $\C(W)$. If $a_1,\dots,a_n\in\C(W)$ are linearly independent over $\Q$ modulo constants, then $\exp(a_1),\dots,\exp(a_n)$, as functions on $W$, are algebraically independent over $\C(W)$. 
\end{theorem}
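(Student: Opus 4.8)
The plan is to deduce this from Ax's theorem on Schanuel's conjecture for power series \cite{Ax1971}. The form I would use is: if $x_1,\dots,x_N\in\C[[t_1,\dots,t_m]]$ are linearly independent over $\Q$ modulo constants, then, writing $y_i=\exp(x_i)$, one has
\[
\trdeg_\C\C(x_1,\dots,x_N,y_1,\dots,y_N)\ \ge\ \rk\bigl(\partial x_i/\partial t_j\bigr)+N .
\]
Applied naively with $N=n$ and $x_i=a_i$ this only gives that $\exp(a_1),\dots,\exp(a_n)$ are algebraically independent over $\C(a_1,\dots,a_n)$, whereas we want independence over the full function field $\C(W)$; bridging that gap is, I expect, the main obstacle, and I would handle it by enlarging the system of functions so that the remaining transcendence of $\C(W)$ is absorbed into the rank term.

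Concretely: first I would pick a transcendence basis $u_1,\dots,u_d$ of $\C(W)/\C$ (with $d=\dim W$) extending a maximal algebraically independent subset of $\{a_1,\dots,a_n\}$, say $u_1=a_1,\dots,u_r=a_r$. A routine check shows that $a_1,\dots,a_n,u_{r+1},\dots,u_d$ are still linearly independent over $\Q$ modulo constants (substitute $a_i=u_i$ for $i\le r$ in a putative relation and use algebraic independence of $u_1,\dots,u_d$ over $\C$ together with the hypothesis on the $a_i$). Then I would move to a general smooth point $p\in W$, use $t_j=u_j-u_j(p)$ as local analytic coordinates, and expand at $p$ to get a $\C$-algebra embedding $\C(W)\hookrightarrow\C[[t_1,\dots,t_d]]$; this embedding is injective, preserves linear (in)dependence over $\Q$ modulo constants, sends each $\exp(a_i)$ and $\exp(u_k)$ to an exponential of a power series, and lands everything in a differential ring whose constants are exactly $\C$ --- which is why this step matters.

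Finally I would apply Ax's theorem to the $N:=n+(d-r)$ power series $\tilde a_1,\dots,\tilde a_n,\tilde u_{r+1},\dots,\tilde u_d$. Their Jacobian with respect to $t_1,\dots,t_d$ has rank $d$, since the rows coming from $u_1=a_1,\dots,u_r=a_r$ and from $u_{r+1},\dots,u_d$ already form a $d\times d$ identity block. Hence
\[
\trdeg_\C\C\bigl(\tilde a_i,\tilde u_k,\exp(\tilde a_i),\exp(\tilde u_k)\bigr)\ \ge\ d+N ,
\]
and since the functions $\tilde a_i,\tilde u_k$ generate over $\C$ a field of transcendence degree $d$ (it is algebraic over $\C(t_1,\dots,t_d)$), subtracting shows that the $N$ exponentials $\exp(\tilde a_i),\exp(\tilde u_k)$ are algebraically independent over $\C(\tilde a_i,\tilde u_k)$, hence over its algebraic closure, which contains the image of $\C(W)$. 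In particular $\exp(a_1),\dots,\exp(a_n)$ are algebraically independent over $\C(W)$ --- as germs at $p$, and therefore globally, since a polynomial relation over $\C(W)$ would restrict to one among the germs. The one genuinely delicate point is the passage from ``independent over $\C(a_1,\dots,a_n)$'' to ``independent over $\C(W)$'': it is precisely the choice of a transcendence basis adapted to the $a_i$, together with the power-series localization that pins the constants down to $\C$, that makes the enlargement legitimate.
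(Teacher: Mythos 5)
Your argument is correct and is exactly the derivation the paper has in mind: it states this result without proof as ``a well known consequence of Ax's theorem'' on power series, and your reduction --- adapting a transcendence basis of $\C(W)/\C$ to the $a_i$, localizing at a general point so that the $u_j-u_j(p)$ give formal coordinates, and applying Ax's inequality to the enlarged system so that the Jacobian rank term equals $d$ --- is the standard way that consequence is obtained (cf.\ the cited surveys of Pila). The only cosmetic imprecision is that $\C(W)$ embeds into the fraction field of $\C[[t_1,\dots,t_d]]$ rather than into the power series ring itself, which affects nothing since the finitely many functions involved are regular at a general $p$.
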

\begin{proposition}\label{PropAx} Let $a\in \C[z]$ and $f,g\in \Rcal$. If  $\exp(aw)=f+gw$ as holomorphic functions on $B$, then $a=0$.
\end{proposition}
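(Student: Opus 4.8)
The plan is to argue by contradiction: suppose $a \neq 0$, and derive an algebraic dependence that contradicts the Ax--Lindemann--Weierstrass theorem (Theorem \ref{ALW}) applied to a suitable algebraic curve. Since $f,g\in\Rcal$, we may write $f = \sum_i p_i\exp(q_i)$ and $g = \sum_j \tilde p_j\exp(\tilde q_j)$ with $p_i, q_i, \tilde p_j, \tilde q_j \in \C[z]$. The relation $\exp(aw) = f + gw$ holds on the Riemann surface $B$, which is an affine algebraic curve with function field $\C(B) = \C(z,w) = \C(z)(\sqrt{z^2-1})$. Pulling everything back to $B$, the functions $q_i(z), \tilde q_j(z)$ and $a(z)w$ all lie in $\C(B)$, and the identity says that $\exp(aw)$ --- which on $B$ equals $(f+gw)\circ\pi$ --- is a $\C(B)$-linear combination of the functions $\exp(q_i(z))$ and $\exp(\tilde q_j(z))$; indeed it is an element of the ring $\C(B)[\exp(q_i), \exp(\tilde q_j)]$.

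Next I would choose a maximal $\Q$-linearly-independent-modulo-constants subset of $\{a(z)w\} \cup \{q_i(z)\} \cup \{\tilde q_j(z)\}$ inside $\C(B)$, say consisting of functions $b_1,\dots,b_m$; by Theorem \ref{ALW} the functions $\exp(b_1),\dots,\exp(b_m)$ are algebraically independent over $\C(B)$. Every $q_i$ and $\tilde q_j$ is a $\Q$-linear combination of the $b_k$ plus a constant, so $\exp(q_i)$ and $\exp(\tilde q_j)$ become Laurent monomials in the (algebraically independent) $\exp(b_k)$ up to $\C^\times$-scalars and up to taking rational roots. The key point is then that $a(z)w$ must \emph{also} be a $\Q$-linear combination of the $b_k$ modulo a constant. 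If $a(z)w$ were not in the $\Q$-span-mod-constants of $\{q_i\}\cup\{\tilde q_j\}$, then $\exp(aw)$ would be a new, algebraically independent exponential, forcing the identity $\exp(aw) = f+gw$ to express an algebraically independent element as an element of the function field generated by the others --- impossible. Hence $aw$ lies in the $\Q$-span modulo constants of the $q_i$ and $\tilde q_j$, i.e. there is a nontrivial relation $N\cdot a(z)w = \sum \mu_i q_i(z) + \sum \nu_j \tilde q_j(z) + c$ in $\C(B)$, with $N$ a positive integer, $\mu_i,\nu_j\in\Z$, and $c\in\C$.

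The final step extracts the contradiction from this relation. The right-hand side $\sum \mu_i q_i(z) + \sum \nu_j \tilde q_j(z) + c$ is a polynomial in $z$ alone, hence fixed by the Galois involution $w\mapsto -w$ of $\C(B)/\C(z)$; but the left-hand side $N a(z) w$ is sent to $-N a(z) w$ by that involution. Therefore $N a(z) w = 0$, and since $N\neq 0$ and $w\neq 0$ in $\C(B)$, we get $a(z) = 0$, contradicting our assumption. (One should be slightly careful that $a$ might a priori be forced into the span in a way involving the constant $c$; but the parity argument under $w\mapsto -w$ handles this uniformly, since $w$ is anti-invariant while any polynomial in $z$ is invariant.)

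I expect the main obstacle to be the bookkeeping in the second paragraph: correctly setting up the maximal $\Q$-linearly-independent-modulo-constants family and verifying that the algebraic independence of the $\exp(b_k)$ over $\C(B)$ genuinely prevents $\exp(aw)$ from escaping the field $\C(B)(\exp(q_i),\exp(\tilde q_j))$ unless $aw$ is in the appropriate span. One must handle the passage from $\Q$-coefficients to $\Z$-coefficients (clearing denominators introduces roots of the $\exp(b_k)$, which are still algebraically independent over $\C(B)$ after adjoining finitely many roots, since a purely transcendental extension stays transcendental) and make sure the scalars $c\in\C$ are absorbed harmlessly into $\C(B)$. Once that structural statement is pinned down, the Galois-parity contradiction at the end is immediate.
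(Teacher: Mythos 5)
Your proposal is correct and takes essentially the same route as the paper: both rest on applying the Ax--Lindemann--Weierstrass theorem with $W=B$ together with the observation that $aw$ cannot enter a $\Q$-linear relation modulo constants with polynomials in $z$ because it is anti-invariant under the involution $w\mapsto -w$ (equivalently, $aw\notin\C(z)$ when $a\neq 0$). The only difference is organizational --- you fix a maximal $\Q$-linearly-independent-modulo-constants family once and conclude by a transcendence-degree argument, whereas the paper iteratively eliminates exponentials by repeated applications of ALW --- and your one-step version, including the correct handling of the rational exponents via algebraicity of roots, is a clean alternative to the paper's iteration.
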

\begin{proof} 
For the sake of contradiction, suppose that $\exp(aw)=f+gw$ holds with $a\ne 0$. Recalling the definition of $\Rcal$ and rearranging terms, we obtain an identity of the form
\begin{equation}\label{eqalgdep}
\exp(a(z)w) = \sum_{k=1}^N \left(p_k(z) + q_k(z)w\right)\exp(b_k(z))
\end{equation}
for a suitable integer $N$ and certain polynomials $p_k,q_k,b_k\in \C[z]$ (for $k=1,...,N$). Applying Theorem \ref{ALW} with $W=B$, we obtain that $aw$, $b_1$, \dots, $b_N$ are linearly dependent over $\Q$ modulo constants. Since $w$ is quadratic over $\C(z)$, we deduce that $b_1$, \dots, $b_N$ are linearly dependent over $\Q$ modulo constants, so that we have $\sum_{k=1}^{N}\lambda_k b_k=c_0$ for some $\lambda_k\in\Q$ and $c_0\in\C$. Modulo relabelling, we can assume $\lambda_N\ne0$. Let $P_k\in\C[z]$ be such $b_N=c+\sum_{k=1}^{N-1}\alpha_k P_k$, where $c\in\C$ and $\alpha_k\in\Z$ (so $b_k$ is an integer times $P_k$). Replacing $b_1$, \dots, $b_N$ in Eq. \eqref{eqalgdep} by their expression in terms of the $P_k$, we obtain an algebraic relation between $\exp(aw)$, $\exp(P_1)$, \dots, $\exp(P_{N-1})$. By Theorem \ref{ALW}, we deduce that $P_1$, \dots, $P_{N-1}$ are linearly dependent modulo constants. Repeating this process, we get an algebraic relation between $\exp(aw)$ and $\exp(Q)$, hence $Q$ is constant, which is absurd. 
\end{proof}

\begin{proof}[Proof of Theorem \ref{ThmIntroPell}.] Every solution of Equation \eqref{Pell} in $\Z[z]$ is in $\Rcal$. Conversely, let $x,y\in \Rcal$ be a solution of Equation \eqref{Pell}. By Theorem $\ref{Analytic}$, $x$ and $y$ satisfy $x+yw = \pm (z+w)^n \cdot \exp(hw)$ for some $n\in \Z$ and $h\in\Hcal_\C$. Let $f,g\in\Hcal_\C$ be such that $f+gw=(x+yw) \cdot (z+w)^{-n}$. We notice that $f, g$ lie in $\Rcal$ (expanding the product) and $f+gw=\pm \exp(hw)$. Hence, by Theorem \ref{Growth}  we have that $h\in \C[z]$, and by Proposition \ref{PropAx} we have $h=0$. Hence, $x+yw = \pm (z+w)^n$ and we get $x,y\in \Z[z]$ by expanding this expression and using $w^2=z^2-1$.
\end{proof}



\section{Interpretability and definability}\label{definabilitysection}

For $n\in\Z$, we let $x_{n},y_{n}\in \C[z]$ be defined by $x_{n}+y_{n}\sqrt{z^2-1} = (z+\sqrt{z^2-1})^n$. Let us recall from \cite{Denef1978} that
\begin{itemize} 
\item[(D1)] $x_n,y_n\in \Z[z]$ for each $n\in \Z$.
\item[(D2)]  The value of $y_{n}$ at $z=1$ is equal to $n$.
\item[(D3)]  The pairs $(\pm x_n,y_n)$ for $n\in \Z$ are precisely \emph{all} the solutions of Equation \eqref{Pell} in $\C[z]$. 
\end{itemize}

In this section we let $\Rcal'\subseteq \Rcal$ be a subring containing the variable $z$. We remark that $\Z[z]\subseteq \Rcal'\subseteq \Rcal$. As in the introduction, we consider the language $\Lcal_z=\{0,1,z,+,\times,=\}$ and we view $\Rcal'$ as an $\Lcal_z$-structure in the obvious way.

Let us make a slight abuse of notation to simplify formulas: When we write a positive existential $\Lcal_z$-formula to define a set or relation on $\Rcal'$, we can use symbols that stand for functions or relations that are already known to be positive existentially definable on $\Rcal'$ over $\Lcal_z$.

Let us define the set 
$$
\Rcal'_{\Z}=\{f\in \Rcal' : \textrm{ there exists } p \in \Z[z] \textrm{ and } u\in \Rcal' \mbox{ such that } f-p=(z-1)u\}. 
$$
The main reason to work with this slightly technical definition is that it is not clear whether the binary relation $f(1)=g(1)$ is positive existentially definable in $\Rcal'$ over $\Lcal_z$, but in $\Rcal'_\Z$ such difficulties are manageable.

Note that the previous definition of $\Rcal'_\Z$ is not a first-order definition, because we do not know whether $\Z[z]$ is first-order definable in $\Rcal'$. Nevertheless, we will be able to show that $\Rcal'_\Z$ is first-order positive existentially definable in $\Rcal'$. 

\begin{lemma} We have that $\Rcal'_{\Z}$ is a ring and $\Z[z] \subseteq \Rcal'_{\Z} \subseteq \{f\in \Rcal' : f(1)\in \Z\}$.
\end{lemma}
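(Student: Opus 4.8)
The plan is to verify the three assertions in turn, all directly from the definition of $\Rcal'_\Z$. First I would check that $\Rcal'_\Z$ is a ring. It clearly contains $0$ and $1$ (take $p=0$, $u=0$ and $p=1$, $u=0$ respectively). For closure under addition and multiplication, suppose $f-p=(z-1)u$ and $f'-p'=(z-1)u'$ with $p,p'\in\Z[z]$ and $u,u'\in\Rcal'$. Then $(f+f')-(p+p')=(z-1)(u+u')$, and since $\Z[z]$ is a ring and $u+u'\in\Rcal'$, we get $f+f'\in\Rcal'_\Z$. For the product, write $ff'-pp' = f f' - p f' + p f' - p p' = (f-p)f' + p(f'-p') = (z-1)(uf' + pu')$; since $uf'+pu'\in\Rcal'$ (using $p\in\Z[z]\subseteq\Rcal'$) and $pp'\in\Z[z]$, we conclude $ff'\in\Rcal'_\Z$. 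Closure under negation is similar (or follows from closure under multiplication by $-1\in\Z[z]$). Hence $\Rcal'_\Z$ is a subring of $\Rcal'$.

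Next, $\Z[z]\subseteq\Rcal'_\Z$: for $f\in\Z[z]$ take $p=f$ and $u=0$. Finally, for the upper bound, suppose $f\in\Rcal'_\Z$, say $f-p=(z-1)u$ with $p\in\Z[z]$ and $u\in\Rcal'$. Evaluating the holomorphic functions on both sides at $z=1$ gives $f(1)-p(1)=0$, so $f(1)=p(1)\in\Z$ since $p$ has integer coefficients. Thus $\Rcal'_\Z\subseteq\{f\in\Rcal' : f(1)\in\Z\}$.

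There is no serious obstacle here; the only point requiring a moment's care is the multiplicative closure, where one must add and subtract a cross term to express $ff'-pp'$ as an element of $\Z[z]$ plus $(z-1)$ times something in $\Rcal'$, crucially using that $p\in\Z[z]$ is itself an element of $\Rcal'$ so that products like $pu'$ stay in $\Rcal'$. The evaluation-at-$z=1$ step for the containment in $\{f : f(1)\in\Z\}$ is legitimate because every element of $\Rcal'$ is an entire function and the identity $f-p=(z-1)u$ is an identity of entire functions, so it holds pointwise at $z=1$.
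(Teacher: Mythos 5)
Your proof is correct and follows essentially the same route as the paper's: the containments are immediate from the definition, and multiplicative closure is handled by the same cross-term decomposition $ff'-pp'=(f-p)f'+p(f'-p')=(z-1)(uf'+pu')$, using that $p\in\Z[z]\subseteq\Rcal'$. The evaluation-at-$z=1$ step for the upper containment is likewise exactly what the paper (implicitly) uses.
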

\begin{proof} The inclusions are clear. Also $\Rcal'_{\Z}$ is closed under addition. Regarding multiplication, let $f,g\in \Rcal'_{\Z}$. Take $p,q\in \Z[z]$ and $u,v\in \Rcal'$ such that $f-p=(z-1)u$ and $g-q=(z-1)v$. We have
$$
(z-1)ug=fg-pg=fg-(q+(z-1)v)p= fg-pq-(z-1)pv
$$
hence, $fg-pq=(z-1)(ug+pv)$ where $pq\in \Z[z]$ and $ug+pv\in \Rcal'$.
\end{proof}
\begin{lemma}\label{LemmaKeyLogic} The set $\Rcal'_{\Z}$ and the binary relation $\Val$ defined by
$$
\Val(f,g):\quad  f,g\in \Rcal'_{\Z}\mbox{ and } f(1)=g(1)
$$
are positive existentially definable in $\Rcal'$ over $\Lcal_z$.
\end{lemma}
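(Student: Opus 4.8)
\textbf{Proof plan for Lemma \ref{LemmaKeyLogic}.}

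The plan is to reduce everything to the functional Pell equation \eqref{Pell}, whose solutions are completely understood by Theorem \ref{ThmIntroPell} together with facts (D1)--(D3). First I would give a positive existential definition of $\Rcal'_\Z$ directly from its stated description: $f\in \Rcal'_\Z$ iff $\exists p\,\exists u$ with $f-p=(z-1)u$, where $p$ ranges over $\Z[z]$ and $u$ over $\Rcal'$. The only non-trivial point is that ``$p\in\Z[z]$'' must itself be made positive existential over $\Lcal_z$; this is exactly where the Pell equation enters. By (D3) and Theorem \ref{ThmIntroPell}, the set of $x\in\Rcal'$ for which $\exists y\in\Rcal'$ with $x^2-(z^2-1)y^2=1$ is precisely $\{\pm x_n : n\in\Z\}\subseteq\Z[z]$, and similarly the accompanying $y$ lies in $\{y_n:n\in\Z\}\subseteq\Z[z]$. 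Thus $\{(x_n,y_n):n\in\Z\}$ is positive existentially definable. From a single such pair one recovers a rich supply of elements of $\Z[z]$; the standard trick (as in \cite{Denef1978}) is that $\Z$ itself, hence $\Z[z]$, becomes positive existentially definable once one can say ``$z$ divides $x-1$'' or use the evaluation-at-$1$ behaviour encoded by (D2). Concretely, I would show that $\Z[z]$ is positive existentially definable in $\Rcal'$ over $\Lcal_z$ by combining: (i) the Pell pairs give all of $\{y_n\}$, and $y_1=1$, $y_{-1}=-1$ are in there, so $\Z\cdot 1$ — more precisely the image under $z\mapsto 1$ — is accessible; (ii) closure under the ring operations and under the change of variables sending the Pell group structure to addition on the index $n$ yields every integer; (iii) then $\Z[z]=\Z\cdot 1\cdot\{z^k\}$ is obtained by taking finite sums of products with powers of $z$, which is positive existential since $z$ is in the language. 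Once $\Z[z]$ is positive existentially definable, the displayed definition of $\Rcal'_\Z$ is visibly positive existential.

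For the binary relation $\Val$, the idea is that for $f,g\in\Rcal'_\Z$ the difference $f-g$ again lies in $\Rcal'_\Z$ (a ring, by the previous lemma), say $f-g = p + (z-1)u$ with $p\in\Z[z]$ and $u\in\Rcal'$; then $f(1)=g(1)$ iff $p(1)=0$ iff $(z-1)\mid p$ in $\Z[z]$. Dividing out, $f(1)=g(1)$ iff $f-g=(z-1)u'$ for some $u'\in\Rcal'$ with the extra constraint that $u'$ ``comes from'' a genuine polynomial correction, i.e. iff $f-g\in(z-1)\Rcal' \cap \big(\Z[z]+(z-1)\Rcal'\big)$ with the polynomial part divisible by $z-1$. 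More cleanly: since we have already shown $\Z[z]$ is positive existentially definable, the condition ``$f,g\in\Rcal'_\Z$ and $\exists p\in\Z[z],\ \exists u\in\Rcal'$ with $f-g = (z-1)p + (z-1)^2 u$'' — no wait, I would instead write $\Val(f,g)$ as $f,g\in\Rcal'_\Z \wedge \exists u\in\Rcal'\ \big(f-g=(z-1)u\big)$, because $f-g\in\Rcal'_\Z$ and $(f-g)(1)=0$ together do force $f-g=(z-1)u$ for some $u\in\Rcal'$: writing $f-g=p+(z-1)v$ with $p\in\Z[z]$, $v\in\Rcal'$, the hypothesis $(f-g)(1)=0$ gives $p(1)=0$, so $p=(z-1)\tilde p$ with $\tilde p\in\Z[z]$, whence $f-g=(z-1)(\tilde p+v)$. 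Conversely if $f-g=(z-1)u$ then evaluating at $z=1$ gives $(f-g)(1)=0$. Hence $\Val(f,g)$ is equivalent to the positive existential formula $f,g\in\Rcal'_\Z\ \wedge\ \exists u\ (f-g=(z-1)u)$, and we are done.

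The main obstacle I anticipate is step (i)--(iii): making ``$p\in\Z[z]$'' positive existentially definable in $\Rcal'$ over $\Lcal_z$. This is the crux of the paper's logical side and is exactly why Theorem \ref{ThmIntroPell} was proved. One must be careful that the Pell pairs give not merely polynomials but all of $\Z[z]$, and that the group law on solutions (multiplication in $\Mcal_B$, matching addition of indices via (D2)) can be expressed positive existentially; the subtlety flagged in the introduction — that $f(1)=0$ is weaker than $(z-1)\mid f$ in $\Rcal$ — is precisely what forces the detour through $\Rcal'_\Z$ rather than working directly with evaluation. I would expect the write-up of this lemma to introduce an auxiliary positive existential formula $\mathrm{Pell}(x,y)$ asserting $x^2-(z^2-1)y^2=1$, use Theorem \ref{ThmIntroPell} to identify its solution set with $\{(\pm x_n,y_n)\}$, and then bootstrap to $\Z[z]$; the rest of the lemma, including $\Val$, should then follow by the elementary manipulations sketched above.
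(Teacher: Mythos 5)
Your treatment of $\Val$ is correct and is exactly the paper's: $\Val(f,g)$ holds iff $f,g\in\Rcal'_\Z$ and $\exists u\,(f-g=(z-1)u)$, justified by writing $f-g$ modulo a polynomial part and pulling the factor $z-1$ out of that polynomial. The problem is in your definition of $\Rcal'_\Z$, which you reduce to showing that $\Z[z]$ (and first $\Z$) is positive existentially definable in $\Rcal'$. Your steps (i)--(iii) do not establish this, for two reasons. First, defining the \emph{set} $\Z$ inside $\Rcal'$ requires separating integer constants from all other elements; in this paper that is only achieved \emph{after} Lemma \ref{LemmaKeyLogic}, and only under extra hypotheses (Theorem \ref{ThmIntroDef}) --- in general one gets an interpretation, not a definition, so invoking a p.e.\ definition of $\Z$ here is circular. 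Second, even granting $\Z$ definable, ``$\Z[z]$ is the set of finite sums of products of integers and powers of $z$'' is an infinite disjunction over degrees, not a first-order formula; the subring generated by a definable set need not be definable. So the crux of the lemma --- making ``$p\in\Z[z]$'' positive existential --- is not actually carried out in your plan.

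The paper sidesteps this entirely. Membership in $\Rcal'_\Z$ depends only on $f$ modulo $(z-1)\Rcal'$, and for that it suffices to exhibit, for each $n\in\Z$, \emph{one} p.e.-accessible polynomial in $\Z[z]$ taking the value $n$ at $z=1$; the Pell components $y_n$ are exactly such a family by (D1)--(D3). Concretely, $f\in\Rcal'_\Z$ iff $\exists h\exists g\exists u\,\bigl(h^2-(z^2-1)g^2=1\wedge f-g=(z-1)u\bigr)$: in the forward direction one replaces the witness $p$ by $g=y_{p(1)}$ (using that $p-y_{p(1)}$ vanishes at $1$ and hence is divisible by $z-1$ in $\Z[z]$), and in the converse direction Theorem \ref{ThmIntroPell} forces $g\in\Z[z]$, so $g$ itself serves as the polynomial $p$. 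You had all the ingredients --- you correctly identify the Pell solution set as $\{(\pm x_n,y_n)\}$ and cite (D2) --- but you routed them toward defining all of $\Z[z]$ instead of using a single Pell witness directly as the polynomial in the definition of $\Rcal'_\Z$.
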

\begin{proof} Consider the following positive existential $\Lcal_z$-formula on the free variable $T$:
$$
\phi(T):\quad \exists h\exists g, ((h^2-(z^2-1)g^2=1)\wedge (\exists u, T-g=(z-1)u)).
$$
Let $f\in \Rcal'$. We claim that $f\in \Rcal'_{\Z}$ if and only if $\Rcal'$ satisfies $\phi(f)$. 

Assume $f\in \Rcal'_{\Z}$ and let $p\in \Z[z]$, $v\in \Rcal'$ with $f-p=(z-1)v$. Then $n=f(1)=p(1)\in \Z$ and we can take $h=x_n$ and $g=y_n$, which belong to $\Rcal'$ by (D1). With this choice $h^2-(z^2-1)g^2=1$ holds by (D3), and $g(1)=n$ by (D2). Furthermore, $p-g\in \Z[z]$ satisfies $(p-g)(1)=0$, so there is $q\in \Z[z]$ with $p-g=(z-1)q$. Since $q\in \Z[z]\subseteq \Rcal'_\Z$ we have $v+q\in \Rcal'$ and $f-g=(z-1)(v+q)$. Choosing $u=v+q$, we see that $\Rcal'$ satisfies $\phi(f)$.

Conversely, assume that $\Rcal'$ satisfies $\phi(f)$ and choose $h,g,u\in \Rcal'$ with $h^2-(z^2-1)g^2=1$ and $f-g=(z-1)u$. By Theorem \ref{ThmIntroPell} we have $h,g\in \Z[z]$. Since $f-g=(z-1)u$, we get  $f\in \Rcal'_{\Z}$.

Finally, we claim that $\Val(f,g)$ is defined by $f,g\in \Rcal'_{\Z}\wedge \exists h, f-g=(z-1)h$. Indeed, if the formula holds then $f(1)=g(1)$. Conversely, if $f,g\in \Rcal'_{\Z}$ and $f(1)=g(1)$ let us take $p,q\in \Z[z]$ and $u,v\in \Rcal'$ such that $f-p=(z-1)u$ and $g-q=(z-1)v$. Then $p(1)=f(1)=g(1)=q(1)$ and there is $h\in \Rcal'$ (in fact, in $\Z[z]$) with $p-q=(z-1)h$. Hence, $f-g=p-q+(z-1)(u-v)=(z-1)(h+u-v)$, and the formula holds.
\end{proof}
Let us recall that an \emph{interpretation} of a structure $(M_1,\Lcal_1)$ in a structure $(M_2,\Lcal_2)$ is a function $\theta: A\to M_1$ where $A\subseteq M_2^r$ for certain $r\ge 1$, such that 
\begin{itemize}
\item[(Int1)] $\theta$ is surjective onto $M_1$
\item[(Int2)] $A$ is $\Lcal_2$-definable in $M_2$, and
\item[(Int3)] for each symbol $s\in \Lcal_1$ with realization $s^{M_1}\subseteq M_1^k$, the pre-image of $s^{M_1}$ under $k$-th cartesian power of $\theta$ is $\Lcal_2$-definable in $M_2$.
\end{itemize}
In other words, $M_1$ is the quotient of a definable subset of $M_2$, namely $A$, by a definable equivalence relation (induced by the preimages). 

It is a standard fact that if there is an interpretation of $(M_1,\Lcal_1)$ in $(M_2,\Lcal_2)$ and if the first order theory of $(M_1,\Lcal_1)$  is undecidable, then so is the first order theory of $(M_2,\Lcal_2)$.

The interpretation is said to be \emph{positive existential} if the $\Lcal_2$-formulas used in (Int2) and (Int3) are positive existential. In this case, if the positive existential theory of $(M_1,\Lcal_1)$ is undecidable, then so is the positive existential theory of $(M_2,\Lcal_2)$. 

For short, we write ``p.e.'' instead of ``positive existential''.
\begin{proof}[Proof of Theorem \ref{ThmIntroH10}] Let us check that the function $\theta: \Rcal'_{\Z}\to \Z$ given by $\theta(f)=f(1)$ gives a p.e.\ interpretation of  $(\Z; 0,1,+,\times,=)$ in the $\Lcal_z$-structure $\Rcal'$. 

Since $\Z\subseteq \Rcal'_{\Z}$, (Int1) holds. As $\Rcal'_{\Z}$ is p.e.\ $\Lcal_z$-definable in $\Rcal'$ (by Lemma \ref{LemmaKeyLogic}) we get (Int2) with a p.e. $\Lcal_z$-formula. The pre-images of $0^\Z$ and $1^\Z$ are defined by $\Val(f,0)$ and $\Val(f,1)$ respectively. The pre-image of $=^\Z$ is defined by $\Val(f,g)$. The pre-image of $+^\Z$ is defined by $(f,g,h\in \Rcal'_{\Z})\wedge \Val(f+g,h)$, and the case of $\times^\Z$ is similar since $\Rcal'_{\Z}$ is a ring. By Lemma \ref{LemmaKeyLogic}, we obtain (Int3) with p.e.\ $\Lcal_z$-formulas.

Finally, undecidability of the p.e.\ $\Lcal_z$-theory of $\Rcal'$ follows from the fact that the positive existential theory of $(\Z; 0,1,+,\times,=)$  is undecidable.
\end{proof}

\begin{proof}[Proof of Theorem \ref{ThmIntroDef}]  Assume first that $\Rcal'_{\rm cst}$ is p.e.\ $\Lcal_z$-definable in $\Rcal'$. We note that $\Z=\Rcal'_{\rm cst}\cap \Rcal'_{\Z}$, so $\Z$ is p.e.\ $\Lcal_z$-definable in $\Rcal'$ by Lemma \ref{LemmaKeyLogic}.

In the special case when $\C\subseteq \Rcal'$ we note that for $v\in \Rcal'$, we have $v\in \C$ if and only if $\Rcal'$ satisfies the formula $\exists f, (v^2=f^5+1)$. This is because the curve $y^2=x^5+1$ has geometric genus $2$, so it admits no non-constant meromorphic parametrizations by Picard's theorem.
\end{proof}



%
%
%


\end{document}